\documentclass{amsart}

\usepackage{amssymb}
\usepackage[all,cmtip]{xy}
\usepackage{tikz-cd}
\usetikzlibrary{matrix}
\usepackage{hyperref}

\newcommand{\isomto}{\overset{\sim}{\to}}

\newcommand{\injto}{\hookrightarrow}

\makeatletter
\newcommand*\rel@kern[1]{\kern#1\dimexpr\macc@kerna}
\newcommand*\widebar[1]{%
  \begingroup
  \def\mathaccent##1##2{%
    \rel@kern{0.8}%
    \overline{\rel@kern{-0.8}\macc@nucleus\rel@kern{0.2}}%
    \rel@kern{-0.2}%
  }%
  \macc@depth\@ne
  \let\math@bgroup\@empty \let\math@egroup\macc@set@skewchar
  \mathsurround\z@ \frozen@everymath{\mathgroup\macc@group\relax}%
  \macc@set@skewchar\relax
  \let\mathaccentV\macc@nested@a
  \macc@nested@a\relax111{#1}%
  \endgroup
}
\makeatother

\DeclareMathOperator\Hom{Hom}
\DeclareMathOperator\Frob{Frob}
\DeclareMathOperator\End{End}
\DeclareMathOperator\Spec{Spec}

\DeclareMathOperator\Pic{Pic}

\DeclareMathOperator\Gal{Gal}

\DeclareMathOperator\Br{Br}
\DeclareMathOperator\Brhat{\hat Br}
\DeclareMathOperator\Nm{Nm}

\DeclareMathOperator\tr{tr}

\DeclareMathOperator\rk{rk}
\DeclareMathOperator\disc{disc}

\def\bQ{{\mathbf{Q}}} \def\bZ{{\mathbf{Z}}} 
\def\bF{{\mathbf{F}}}  \def\bR{{\mathbf{R}}}
\def\bC{{\mathbf{C}}}  
\def\bA{{\mathbf{A}}}  \def\bD{{\mathbf{D}}}

\def\cO{{\mathcal{O}}}

\def\cK{{\mathcal K}} 

  \def\rH{{\rm H}} 
  \def\rW{{\rm W}}

 \def\fX{{\mathfrak{X}}}

\def\et{{\rm et}}

\def\crys{{\rm crys}}
\def\HS{{\rm HS}}
\def\NS{{\rm NS}}
\def\alg{{\rm alg}}
\def\trc{{\rm trc}}
\def\Sh{{\rm Sh}}

\theoremstyle{plain}
\newtheorem{theorem}{Theorem}

\newtheorem{lemma}{Lemma}

\newtheorem{proposition}{Proposition}
\theoremstyle{definition}
\newtheorem{definition}{Definition}

\newtheorem{remark}{Remark}

\begin{document}

\title[K3 surfaces over finite fields with given $L$-function]
 {K3 surfaces over finite fields \\ with given $L$-function}

\author
 {Lenny Taelman}

\maketitle

\begin{abstract}
The zeta function of a K3 surface over a finite field satisfies a number of obvious (archimedean and $\ell$-adic) and a number of less obvious ($p$-adic) constraints. We consider the converse question, in the style of Honda-Tate: given a function $Z$ satisfying all these constraints, does there exist a K3 surface whose zeta-function equals $Z$? Assuming semi-stable reduction, we show that the answer is yes if we allow a finite extension of the finite field. An important ingredient in the proof is the construction of complex projective K3 surfaces with complex multiplication by a given CM field. 
\end{abstract}

\section*{Introduction}

Let $X$ be a K3 surface over $\bF_q$. The zeta function of $X$ has the form
\[
	Z(X/\bF_q,T) = \frac{1}{(1-T) L(X/\bF_q,qT) (1-q^2T) }
\]
where the polynomial $L(X/\bF_q)$ is defined by
\[
	L(X/\bF_q,T) := \det( 1-T\Frob, \rH^2(X_{\bar\bF_q}, \bQ_\ell(1)) )  \in \bQ[T].
\]
We have $L(X/\bF_q,T) = \prod_{i=1}^{22} (1-\gamma_i T)$ with the $\gamma_i$ of complex absolute value $1$.  The polynomial $L(X/\bF_q,T)$ factors in $\bQ[T]$ as  $L=L_\alg L_\trc$ with
\[
	L_\alg(X/\bF_q, T) = \!\prod_{\gamma_i \in \mu_\infty} \!(1-T\gamma_i), \quad\quad
	L_\trc(X/\bF_q, T) = \!\prod_{\gamma_i \not\in \mu_\infty} \!(1-T\gamma_i),
\]
where $\mu_\infty$ is the group of complex roots of unity. 

\begin{theorem}\label{thm:properties}
 Let $X$ be a K3 surface over $\bF_q$ with  $q=p^a$. Assume that $X$ is not supersingular. Then 
\begin{enumerate}
\item all complex roots of $L_\trc(X/\bF_q,T)$ have absolute value $1$;
\item no root of $L_\trc(X/\bF_q,T)$ is a root of unity;
\item $L_\trc(X/\bF_q,T) \in \bZ_\ell[T]$ for all $\ell \neq p$; 
\item the Newton polygon of $L_\trc(X/\bF_q,T)$ at $p$ is of the form
\[
\begin{tikzpicture}[scale=.3]
\draw[->] (-1,0)--(20,0);
\draw[->] (0,-3)--(0,1.2);
\draw[very thick] (0,0)--(5,-2)--(13,-2)--(18,0);
\draw[dashed] (5,0)--(5,-2);
\draw[dashed] (0,-2)--(5,-2);
\draw[dashed] (13,0)--(13,-2);
\draw[fill] (0,0) circle [radius=4pt];
\draw[fill] (5,-2) circle [radius=4pt];
\draw[fill] (13,-2) circle [radius=4pt];
\draw[fill] (18,0) circle [radius=4pt];
\draw (5,0) node[above] {$h$};
\draw (13,0) node[above] {$2d-h$};
\draw (18,0) node[above] {$2d$};
\draw (0,-2) node[left] {$-a$};
\end{tikzpicture}
\]
with $h$ and $d$ integers satisfying $1\leq h \leq d \leq 10$;
\item $L_\trc(X/\bF_q,T) = Q^e$ for some $e>0$ and some irreducible $Q\in\bQ[T]$, and
	$Q$ has a unique irreducible factor in $\bQ_p[T]$ with negative slope.
\end{enumerate}
\end{theorem}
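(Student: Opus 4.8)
Parts (1) and (2) are immediate from the setup. By the Riemann hypothesis for K3 surfaces recalled above every reciprocal root $\gamma_i$ satisfies $|\gamma_i| = 1$, and $L_\trc$ is by construction the product of the factors $1 - \gamma_i T$ with $\gamma_i \notin \mu_\infty$; hence all its roots have absolute value $1$ and none is a root of unity. For (3) I would first note that the factorization $L = L_\alg L_\trc$ is defined over $\bQ$, since the multiset $\{\gamma_i\}$ is $\Gal(\bar\bQ/\bQ)$-stable and Galois preserves the property of being a root of unity. Kronecker's theorem then gives $L_\alg \in \bZ[T]$, as $L_\alg$ has constant term $1$ and all its reciprocal roots are roots of unity. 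Writing $L = \det(1 - T\Frob \mid H^2(X_{\bar\bF_q}, \bZ_\ell(1)))$ for the Frobenius action on the $\bZ_\ell$-lattice shows $L \in \bZ_\ell[T]$; since $L_\alg$ has constant term $1$ it is a unit in $\bZ_\ell[[T]]$, so $L_\trc = L/L_\alg$ lies in $\bZ_\ell[[T]] \cap \bQ_\ell[T] = \bZ_\ell[T]$.

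For (4) the plan is to compute the Newton polygon of the full $L$ from crystalline cohomology and then delete the algebraic part. By Katz--Messing the characteristic polynomial of $\Frob_q$ on $H^2$ is the same whether computed $\ell$-adically or crystallically, so the $p$-adic valuations of the $\gamma_i$ (normalised by $v_p(q) = a$) are read off from the slopes of the $F$-crystal $H^2_{\crys}(X/W)$. A non-supersingular K3 surface has finite height $h$, and by Artin $1 \le h \le 10$; by Artin--Mazur and Illusie the Newton slopes of $H^2_{\crys}$ with respect to $\Frob_p$ are then $1 - 1/h$, $1$, $1 + 1/h$ with multiplicities $h$, $22 - 2h$, $h$. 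Passing to $\Frob_q = \Frob_p^a$ and twisting by $1$ gives $v_p(\gamma_i) \in \{-a/h, 0, a/h\}$ with the same multiplicities. The reciprocal roots of $L_\alg$ are roots of unity, hence of valuation $0$, so $L_\alg$ only removes slope-$0$ points; writing $\deg L_\trc = 2d$, the Newton polygon of $L_\trc$ therefore has slopes $-a/h$, $0$, $a/h$ with multiplicities $h$, $2d - 2h$, $h$, exactly the asserted shape. The bounds follow: $h \ge 1$ as the height is finite, $h \le d$ because $2d - 2h \ge 0$, and $d \le 10$ because $\deg L_\alg = 22 - 2d$ is even (the non-real unit-circle roots pair up as $\gamma, \gamma^{-1}$) and positive (a polarization supplies an algebraic, hence root-of-unity, class, so $\rho \ge 1$), whence $22 - 2d \ge 2$.

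Part (5) is where the real work lies. Write $V_\trc \subset H^2(X_{\bar\bF_q}, \bQ_\ell(1))$ for the $\Frob$-stable, $\bQ$-rational subspace on which Frobenius has no root-of-unity eigenvalue. Frobenius acts semisimply on $H^2$ of a K3 surface, so $\bQ[\Frob]|_{V_\trc}$ is a product $\prod_j E_j$ of number fields and $L_\trc = \prod_j Q_j^{e_j}$, where $Q_j$ is the minimal polynomial attached to $E_j$. The cup product restricts to a nondegenerate pairing under which $\Frob$ is a similitude with multiplier $q$; its adjoint involution stabilises each factor (the roots occur in conjugate pairs $\gamma, \gamma^{-1}$) and, since every conjugate of $\gamma$ has absolute value $1$ and $\gamma \neq \pm 1$, realises each $E_j$ as a CM field. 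To obtain a single factor, i.e. $L_\trc = Q^e$ with $Q = Q_{j_0}$ and $E = \bQ(\gamma)$ a single CM field, I would transport Zarhin's theorem that the endomorphism algebra of the transcendental part of a K3 surface is a field: the one-dimensionality of the $(2,0)$-part forces any second factor to be of pure Tate type and hence, by the Tate conjecture for K3 surfaces, algebraic, contradicting its transcendence. Over $\bF_q$ I would make this rigorous through the Kuga--Satake abelian variety, deducing the field structure from Tate's theorem on homomorphisms and Frobenius of abelian varieties over finite fields. Finally, the negative-slope part of $V_\trc$ is the Dieudonn\'e isocrystal of the formal Brauer group $\Brhat_X$, a connected one-dimensional formal group of height $h$; this isocrystal is simple (isoclinic of slope $1/h$ in lowest terms), so the negative valuations concentrate at a single place of $E$ above $p$, giving $Q$ a unique irreducible factor of negative slope in $\bQ_p[T]$.

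The main obstacle is the isotypy statement $L_\trc = Q^e$. Parts (1)--(4) only assemble the Riemann hypothesis, the $\ell$-adic/crystalline comparison and the slope structure of finite-height K3 surfaces, all of which are available off the shelf. By contrast, excluding a splitting of the transcendental Frobenius module into several distinct CM blocks requires running Zarhin's $(2,0)$-argument, and the decomposition of $V_\trc$ into fields is a priori only one of $\bQ[\Frob]$-modules, which need not respect the Hodge--de Rham filtration (crystalline Frobenius shifts $\Fil^\bullet$). Upgrading the decomposition to one by genuine, filtration-preserving endomorphisms — which is what lets the $1$-dimensionality of $\Fil^2 H^2_{\dR}$ bite — is the delicate point, and it is precisely here that I expect to need the Kuga--Satake variety together with Tate's theorem and the Tate conjecture for K3 surfaces.
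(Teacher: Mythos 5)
Your treatment of parts (1)--(4) is correct, and in places more self-contained than the paper's own proof (which simply cites Deligne for (1)--(3) and Mazur's ``Newton above Hodge'' for (4)): your route through the $\bZ_\ell$-lattice for (3), and through the Artin--Mazur formal Brauer group plus duality for the slope multiplicities in (4), is the standard one and does give the stated polygon together with the bounds $1\le h\le d\le 10$.

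The gap is in part (5), which is the heart of the theorem, and specifically in the isotypy statement $L_\trc=Q^e$. You defer this to a program --- transporting Zarhin's $(2,0)$-argument through the Kuga--Satake variety, semisimplicity of Frobenius, Tate's theorem and the Tate conjecture --- which you never carry out and yourself flag as the unresolved ``delicate point.'' As sketched it does not go through: over $\bF_q$ there is no Hodge filtration, and the finite-field substitute for ``a second factor has trivial $(2,0)$-part, hence is of Tate type'' is ``a second factor has no reciprocal root of negative $p$-adic valuation, hence all its reciprocal roots are roots of unity''; that implication is not the Tate conjecture but exactly Kronecker's theorem, since by your own part (3) the reciprocal roots are $\ell$-adic integers for all $\ell\neq p$ and by (1) all their archimedean absolute values equal $1$. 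This observation closes the gap, and you already have the other needed ingredient on the table: as you note at the end, the negative-slope part of $L_\trc$ is computed by the isocrystal of $\Brhat X$, which is indecomposable because $\Brhat X$ is a one-dimensional formal group of finite height, so that $L_{\trc,<0}=P^e$ for a single irreducible $P\in\bQ_p[T]$. The paper's proof (following Yu--Yui) simply combines these two facts: every $\bQ$-irreducible factor $Q'$ of $L_\trc$ must have a reciprocal root of negative valuation (else its reciprocal roots are algebraic integers of absolute value $1$, hence roots of unity, contradicting (2)), so $Q'_{<0}=P$; therefore any two irreducible factors of $L_\trc$ share a root and coincide. This yields $L_\trc=Q^e$ and the uniqueness of the negative-slope factor of $Q$ simultaneously, unconditionally, and with no appeal to Kuga--Satake varieties, semisimplicity, or the Tate conjecture --- heavy inputs (delicate in characteristic $2$) on which your route would make the theorem depend.
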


The above theorem collects results of Deligne, Artin, Mazur, Yu and Yui, and slightly expands on these, see
\S \ref{sec:p-adic} for the details. The integer $h$ in the theorem is the {height} of $X$ (which is finite by the assumption that $X$ is not supersingular), and assuming the Tate conjecture (which is now known in almost all cases \cite{Charles13,MadapusiPera15,Charles14}) the Picard rank of $X_{\bar\bF_q}$ is $22-2d$.

\begin{definition}[Property ($\star$)] A K3 surface $X$ over a finite extension $k$ of $\bQ_p$ is said to satisfy ($\star$) if there exists a finite extension $k\subset \ell$ and a proper flat algebraic space $\fX \to \Spec \cO_\ell$ such that
\begin{enumerate}
\item $\fX \times_{\Spec \cO_\ell} \Spec \ell \cong X \times_{\Spec k} \Spec \ell$,
\item $\fX$ is regular,
\item the special fiber of $\fX$ is a reduced normal crossings divisor with smooth components,
\item $\omega_{\fX/\cO_\ell} \cong \cO_{\fX}$.
\end{enumerate}
\end{definition}

Property ($\star$) is a strong form of potential semi-stability. It is expected that every $X$ satisfies ($\star$), but this is presently only known for special classes of K3 surfaces, see \cite[\S 4]{Maulik14} and \cite[\S 2]{LiedtkeMatsumoto15}.  Our main result is the following partial converse to Theorem \ref{thm:properties}.

\begin{theorem}\label{thm:converse}
Assume every K3 surface $X$ over a $p$-adic field satisfies $(\star)$. Let 
\[
	L= \prod_{i=1}^{2d} (1-\gamma_iT) \in 1+T \bQ[T]
\]
be a  polynomial which satisfies properties (1)--(5) of Theorem \ref{thm:properties}. Then there exists a positive integer $n$ and a K3 surface $X$ over $\bF_{q^n}$ such that
\[
	L_{\trc}(X/\bF_{q^n},T) = \prod_{i=1}^{2d} (1-\gamma_i^nT).
\]
\end{theorem}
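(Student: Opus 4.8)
The plan is to run a Honda--Tate style argument adapted to K3 surfaces: from $L$ I would manufacture a CM field together with a CM type of K3 type, build a complex projective K3 surface with complex multiplication realizing this data, descend it to a number field, reduce modulo a prime above $p$, and read off the Frobenius eigenvalues from the CM structure. Fix a root $\pi=\gamma_1$ of the irreducible polynomial $Q$ from property (5). By (1) and (2) it has absolute value $1$ but is not a root of unity, so $E_0:=\bQ(\pi)$ is a CM field, $\pi\bar\pi=1$, and $[E_0:\bQ]=\deg Q = 2d/e$. I would then select a CM field $E\supseteq E_0$ with $[E:\bQ]=2d$, a CM type $\Phi$ of \emph{K3 type} (a single distinguished archimedean place, so that the associated Hodge structure has $h^{2,0}=1$), and a place $\mathfrak p\mid p$ of $E$, chosen so that the Shimura--Taniyama recipe attached to $(E,\Phi,\mathfrak p)$ reproduces the Newton polygon of property (4). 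Properties (4) and (5) are precisely what make this possible: the polygon prescribes a negative-slope part of width $h$, a slope-$0$ part of width $2d-2h$, and a symmetric positive-slope part, while the uniqueness of the negative-slope factor over $\bQ_p$ pins down a single place of $E$ above $p$ carrying these slopes.

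\textbf{Constructing the complex CM surface.} Using Zarhin's description of transcendental Hodge structures of K3 type together with the surjectivity of the period map, I would realize $(E,\Phi)$ geometrically. Concretely, one builds a polarizable integral Hodge structure of K3 type whose transcendental part is a one-dimensional $E$-vector space with Hodge filtration dictated by $\Phi$, embedded into the K3 lattice $\Lambda$ (even unimodular of signature $(3,19)$); the bound $2d\le 20$ coming from $d\le 10$ leaves the transcendental lattice of signature $(2,2d-2)$ enough room, its orthogonal complement of signature $(1,21-2d)$ carrying a polarization. This produces a complex projective K3 surface $X_\bC$ whose transcendental motive has $\End$ equal to $E$; since $\pi\in E_0\subsetneq E$ with $[E:E_0]=e$, the characteristic polynomial of $\pi$ acting on the transcendental part is $Q^e$, matching property (5). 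This CM construction is the principal geometric input advertised in the abstract.

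\textbf{Descent, reduction, and conclusion.} A K3 surface with complex multiplication has Mumford--Tate group a torus and is defined over $\bar\bQ$, and the main theorem of complex multiplication for K3 surfaces describes the Galois action on its transcendental part by a Hecke character of $E$. Spreading $X_\bC$ out and choosing a prime above $p$, this representation is potentially unramified, since a CM, hence abelian, representation has inertia acting through a finite quotient. Invoking the standing assumption $(\star)$ to obtain potential semistable reduction and combining it with potential unramifiedness through the N\'eron--Ogg--Shafarevich criterion for K3 surfaces, I would deduce potential good reduction: after a finite extension $X$ reduces to a smooth K3 surface $\bar X$ over some $\bF_{q^n}$. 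Finally, via the $\ell$-adic and crystalline comparisons the geometric Frobenius acts on the transcendental part of $\rH^2(\bar X_{\bar\bF_q},\bQ_\ell(1))$ through the Weil number determined by the Shimura--Taniyama formula for $(E,\Phi,\mathfrak p)$, which by construction equals $\pi^n$ up to a harmless further enlargement of $n$; this yields $L_\trc(\bar X/\bF_{q^n},T)=\prod_{i=1}^{2d}(1-\gamma_i^n T)$.

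\textbf{Main obstacle.} The crux is the $p$-adic coordination: arranging $(E,\Phi,\mathfrak p)$ so that the Shimura--Taniyama slopes match the prescribed polygon of (4)--(5), and then upgrading potential semistable reduction to good reduction. These are the steps where $(\star)$ is indispensable and where the argument diverges most sharply from the classical abelian-variety template, since the relevant N\'eron--Ogg--Shafarevich and complex-multiplication statements for K3 surfaces are considerably more delicate than their counterparts for abelian varieties.
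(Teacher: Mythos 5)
Your overall architecture---set $F=\bQ(\gamma_1)$, embed it in a CM field $E$ of degree $2d$, build a complex K3 surface with CM by $E$, descend it via the main theorem of CM (Rizov), and reduce at a place above $p$ using $(\star)$ together with the Liedtke--Matsumoto criterion---is exactly the paper's, and your treatment of descent and potential good reduction (finiteness of the inertia image for an abelian $\ell$-adic representation, then the good-reduction criterion) is sound. But the two steps you yourself flag as the crux are genuinely missing, and they are where all the work lies. First, the Frobenius identification: there is no off-the-shelf ``Shimura--Taniyama formula'' for K3 surfaces that you can invoke, and even granting slope data, slopes alone cannot yield the conclusion, because a priori the Frobenius action on $T(X)_{\bQ_\ell}$ is only an element of $G(\bQ_\ell)$ (equivalently, by Rizov's theorem, a class in $G(\bA_f)/G(\bQ)$), not a global Weil number in $E^\times$ whose valuations you could compare with those of $\gamma_1$. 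The paper extracts an actual equality from the adelic reciprocity law: if $\sigma$ is a Frobenius at $w$ and $\pi\in E_v^\times$ its image under local class field theory, then the idele $\alpha=\gamma_1^{[\bF_w:\bF_q]}\cdot(1,\dots,1,\pi,\bar\pi,1,\dots,1)$ lies in the maximal compact subgroup $\cK=\{g\in(\cO_E\otimes\hat\bZ)^\times \mid g\bar g=1\}$ of $G(\bA_f)$, while $\rho(\sigma)/\alpha$ lies in $\ker\bigl(\cK\to G(\bA_f)/G(\bQ)\bigr)=\{g\in\cO_E^\times\mid g\bar g=1\}$, a finite group; hence $\rho(\sigma^m)=\alpha^m$ for some $m>0$, and the components of $\alpha$ away from $p$ are just $\gamma_1^{[\bF_w:\bF_q]}$. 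This unit-group argument is the replacement for the Shimura--Taniyama step in Honda--Tate, and your proposal does not supply it: the sentence ``which by construction equals $\pi^n$ up to a harmless further enlargement of $n$'' is precisely the statement that needs proof.

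Second, the compactness of $\alpha$ above forces a careful choice of $E$ that you never make: the unique negative-slope place $v$ of $F$ must extend to a \emph{unique} place of $E$, with $[E_v:\bQ_p]=h$. Otherwise $\gamma_1$ has negative valuation at several places of $E$ above $p$, whereas the idele attached to $\sigma$ by class field theory is supported at only one of them, so no cancellation (hence no compactness, hence no finiteness) is possible. The paper constructs such an $E$ explicitly as $F[X]/P(X)$, where $P\in F_0[X]$ ($F_0$ the maximal totally real subfield of $F$) has degree $e$, is totally split at every real place, and is irreducible at the place of $F_0$ below $v$; properties (4) and (5) enter exactly here, giving $[F_v:\bQ_p]=h/e$ and hence $[E_v:\bQ_p]=h$. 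Finally, a smaller but real gap in your complex construction: ``enough room'' does not suffice when $d=10$, since the orthogonal complement of the transcendental part then has rank $2$, below the range where existence of rational quadratic forms with prescribed invariants is automatic; realizing $(E,q_\lambda)$ inside $\Lambda_{K3,\bQ}$ in that case requires Bayer-Fluckiger's theorem on embeddings of maximal tori in orthogonal groups, as in Proposition \ref{prop:CM-to-K3}.
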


The proof of Theorem \ref{thm:converse} follows the same strategy as the proof of the Honda-Tate theorem \cite{Tate71}: given $L_\trc$, one constructs a K3 surface over a finite field by first producing a complex projective K3 surface with CM by a suitably chosen CM field, then descending it to a number field, and finally reducing it to the residue field at a suitably chosen prime above $p$. In the final step a criterion of good reduction is needed, which has been obtained recently by Matsumoto \cite{Matsumoto15} and Liedtke-Matsumoto \cite{LiedtkeMatsumoto15}, under the assumption ($\star$).

A crucial intermediate result, that may be of independent interest, is the following theorem.

\begin{theorem}\label{thm:K3-with-given-CM-field}
Let $E$ be a CM field with $[E:\bQ]\leq 20$. Then there exists a 
K3 surface over $\bC$ with CM by $E$.
\end{theorem}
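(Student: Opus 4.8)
The property of having CM by $E$ depends only on the rational Hodge structure on the transcendental lattice, so the plan is to first build an abstract polarized $\bQ$-Hodge structure of K3 type with endomorphisms by $E$, and then realise it geometrically through the surjectivity of the period map. For the first step I take $V=E$, a one-dimensional $E$-vector space, equipped with the symmetric $\bQ$-bilinear form $\psi(x,y)=\operatorname{Tr}_{E/\bQ}(\alpha x\bar y)$, where $\bar{\phantom x}$ denotes complex conjugation on $E$ and $\alpha\in E$ is totally real. A direct check shows that $\psi$ is symmetric and satisfies $\psi(ex,y)=\psi(x,\bar e y)$ for $e\in E$; over $\bR$ it diagonalises over the archimedean places of the totally real subfield $E^{+}$, each place contributing a positive or negative definite plane according to the sign of $\alpha$ there. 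Using weak approximation in $E^{+}\otimes\bR$ I choose $\alpha$ positive at exactly one real place and negative at the others, so that $\psi$ has signature $(2,[E:\bQ]-2)$. Declaring $V^{2,0}$ to be the eigenline of $V\otimes\bC$ for the distinguished embedding and $V^{0,2}$ its conjugate then defines a Hodge structure of K3 type polarized by $\psi$, on which $E$ acts by Hodge endomorphisms.

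Because $E$ is a field, its embeddings form a single $\Gal(\bar\bQ/\bQ)$-orbit, so $V$ has no proper sub-Hodge structure and is $\bQ$-irreducible. By Zarhin's theorem the endomorphism algebra of an irreducible Hodge structure of K3 type is a totally real or CM field $F\supseteq E$; comparing degrees via $[F:\bQ]\cdot\dim_F V=[E:\bQ]=\dim_\bQ V$ forces $F=E$, so $V$ has CM by $E$ exactly.

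To realise $V$ I choose an even lattice $T\subset V$ together with a primitive embedding $T\hookrightarrow\Lambda_{K3}=U^{\oplus3}\oplus E_8(-1)^{\oplus2}$ carrying $\psi$ to the K3 form. Surjectivity of the period map (Todorov, Looijenga, et al.) then produces a complex K3 surface $X$ with $H^2(X,\bZ)\cong\Lambda_{K3}$ whose period lies in $T^{2,0}$. Irreducibility of $V$ guarantees that the transcendental lattice of $X$ is exactly $T$, endowed with the prescribed Hodge structure, so $X$ has CM by $E$. Moreover the orthogonal complement $T^{\perp}$ has signature $(1,21-[E:\bQ])$, hence contains a class of positive self-intersection; as every class in $T^{\perp}$ is of type $(1,1)$, this class is algebraic and $X$ is automatically projective.

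The one genuinely delicate step is the existence of the primitive embedding, and this is where the hypothesis $[E:\bQ]\le20$ enters. The bound is precisely the signature constraint $[E:\bQ]-2\le19$, and it leaves an orthogonal complement of rank $\ge2$. By Nikulin's theory a primitive embedding of $T$ into $\Lambda_{K3}$ exists once the discriminant group $A_T$ can be generated by at most $22-[E:\bQ]$ elements; in the extremal case $\rk T=20$ the complement has signature $(1,1)$ and one needs $\ell(A_T)\le2$. Since the CM property constrains only the rational space $V$ and not its integral structure, I have complete freedom in choosing the $\bZ$-form $T$, together with additional freedom in $\alpha$ within its sign class. The main work is to exploit this freedom to arrange, prime by prime, a lattice $T$ of small discriminant length that also embeds rationally into $\Lambda_{K3}\otimes\bQ$ (a Hasse--Minkowski computation), after which Nikulin's criterion applies. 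Controlling the discriminant form at the finitely many ramified primes, especially when $\rk T=20$, is the principal obstacle.
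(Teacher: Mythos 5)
Your Hodge-theoretic setup coincides with the paper's: the trace form $q_\alpha(x,y)=\operatorname{Tr}_{E/\bQ}(\alpha x\bar y)$ with $\alpha$ totally real of signature $(1,d-1)$ (so $q_\alpha$ has signature $(2,2d-2)$), the Hodge structure with $H^{2,0}$ the eigenline of a distinguished embedding, irreducibility, Zarhin's theorem to pin down the endomorphism field, surjectivity of the period map, and projectivity from a positive class in the $(1,21-2d)$-part. (Minor point: your irreducibility argument handles sub-Hodge structures meeting $H^{2,0}$, but you must also rule out nonzero rational sub-Hodge structures of pure type $(0,0)$, e.g.\ by noting that $\sum_{a\in E}aW=E$ would then force $E$ itself to be of type $(0,0)$.) The genuine gap is the step you yourself call ``the principal obstacle'': you never produce an even lattice $T\subset(E,q_\alpha)$ admitting a primitive embedding into $\Lambda_{K3}$. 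This is not a deferrable verification; it is the entire arithmetic content of the theorem. Moreover, your stated criterion ($\ell(A_T)\le 22-\rk T$ suffices) is too optimistic exactly where it matters: when $\rk T=20$ the orthogonal complement has rank $2$, and Nikulin's existence theorem then imposes extra prime-by-prime conditions on the discriminant form beyond a bound on its length. Worse, in that extremal case even the \emph{rational} statement is non-trivial: since the complement has rank $2<3$, Witt/Hasse--Minkowski gives no automatic isometric embedding $(E,q_\alpha)\hookrightarrow\Lambda_{K3,\bQ}$ --- binary forms do not realize all pairs (determinant, Hasse invariant), so one must prove that $\alpha$ can be chosen to make the invariants of $q_\alpha$ match those of a corank-$2$ subspace of $\Lambda_{K3,\bQ}$. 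Determining which quadratic spaces arise as $(E,q_\alpha)$ is a real theorem (Bayer-Fluckiger, on embeddings of maximal tori in orthogonal groups), which the paper invokes for precisely this case; your appeal to ``additional freedom in $\alpha$'' is a placeholder for it.

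The second thing to realize is that the integral problem you got stuck on is avoidable, and the paper avoids it. As you observe yourself, CM depends only on the rational Hodge structure, so the transcendental lattice need not be any prescribed even lattice: it suffices to have a rational isometry $(E,q_\alpha)\oplus V\cong\Lambda_{K3,\bQ}$, to pull back the standard lattice $\Lambda_{K3}$ under it (this gives some lattice $\Lambda$ in the rational quadratic space, isometric to $\Lambda_{K3}$ by construction but in no particular position relative to the summand $E$), and to endow $\Lambda_\bQ$ with the Hodge structure coming from the eigenline decomposition of the $E$-factor. The period map then applies verbatim, and whatever integral transcendental lattice the resulting surface has is irrelevant. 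With this change, for $d<10$ the rational complement $V$ exists automatically (its rank is $\ge 4\ge 3$, so the existence theorem for rational forms with prescribed invariants applies), and only $d=10$ requires Bayer-Fluckiger's theorem. As written, your proposal both takes on an unnecessary integral lattice problem and leaves it --- together with the degree-$20$ rational matching --- unsolved.
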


See \S \ref{sec:CM-theory} for the definition of `CM by $E$', and see \S \ref{sec:CM-existence} for the proof of this theorem.

\begin{remark} I do not know if one can take $n=1$ in Theorem \ref{thm:converse}.  Finite extensions are used in several parts of the proof, both in constructing a K3 surface $X$ over some finite field, and in verifying that the action of Frobenius on $\rH^2$ is the prescribed one. 

Recently Kedlaya and Sutherland have obtained some computational evidence suggesting that the theorem might hold with $n=1$. They enumerated all polynomials $L$ satisfying (1)--(5) with $q=2$, $\deg L =\deg Q=20$ and with $L(1)=2$ and $L(-1)\neq 2$. There are 1995 such polynomials. If $L=L_\trc(X/\bF_2,T)$ for a K3 surface over $\bF_2$, then the Artin-Tate formula \cite{Milne75,ElsenhansJahnel14} puts strong restrictions on the N\'eron-Severi lattice of $X$. These restrictions suggest that $X$ should be realizable as a smooth quartic, and indeed for each of the 1995 polynomials Kedlaya and Sutherland manage to identify a smooth quartic $X$ defined over $\bF_2$ with $L=L_\trc(X/\bF_2,T)$.

If  one can take $n=1$ in Theorem \ref{thm:converse}, then new ideas will be needed to prove this.
Indeed, there is no reason at all that the $X$ constructed in the current proof is defined over $\bF_q$. 
A similar problem occurs in the proof of the Honda-Tate theorem \cite{Tate71}: given a $q$-Weil number one first constructs an abelian variety over a finite extension of $\bF_q$, and then identifies the desired abelian variety as a simple factor of the Weil restriction to $\bF_q$.  Perhaps a variation of this argument in the context of hyperk\"ahler varieties can be made to work in our setting?
\end{remark}

\begin{remark}By the work of Madapusi Pera \cite{MadapusiPera15}, for every $d$ there is an \'etale map $M_{2d} \to \Sh_{2d}$ from the moduli space of quasi-polarized K3 surfaces of degree $2d$ to a an integral model of a certain Shimura variety, over $\bZ[1/2]$. It is surjective over $\bC$, and assuming ($\star$), one can deduce from the criterion of Liedtke and Matsumoto that it is surjective on $\bar\bF_p$-points. In odd characteristic, Kottwitz \cite{Kottwitz90} and Kisin \cite{Kisin15} have given a group-theoretic description of the isogeny classes in $\Sh_{2d}(\bar\bF_p)$, for every $d$. With arguments similar to those in \S \ref{sec:CM-existence}, it should be possible to  deduce Theorem \ref{thm:properties} and Theorem \ref{thm:converse} from the above results. 
\end{remark}

\subsection*{Acknowledgements} This paper has benefited significantly from enlightening discussions with  Eva Bayer, Christophe Cornut, Johan de Jong, Nick Katz, Mark Kisin, Christian Liedtke, Ronald van Luijk,  Yuya Matsumoto, Davesh Maulik, Ben Moonen and Raman Parimala---I am most grateful for their questions, answers, comments and suggestions. Most of the research leading to this paper was performed while I was a member at the IAS. My gratitude to the Institute and its staff for maintaining the most stimulating research environment cannot be overstated. Special thanks go to Momota Ganguli who promptly and enthusiastically fulfilled even my most obscure bibliographical requests.
 Finally, I wish to thank Lotte Meijer and Michel Reymond for the necessary diversions outside geometry.

The author acknowledges financial support of the National Science Foundation (NSF) and the Netherlands Organization for Scientific Research (NWO).

\section{$p$-adic properties of zeta functions of K3 surfaces}\label{sec:p-adic}

\subsection{Recap on the formal Brauer group of a K3 surface}

Let $X$ be a K3 surface over a field $k$. Artin and Mazur have shown \cite{ArtinMazur77} that the functor
\[
	R \,\mapsto\, \ker\left( \Br X_R \to \Br X \right)
\]
on Artinian $k$-algebras is pro-representable by a (one-dimensional) formal group $\Brhat X$ over $k$. This formal group is called the \emph{formal Brauer group} of $X$.

Assume now that $k$ is a perfect field of characteristic $p>0$ and that $X$ is not supersingular. Then $\Brhat$ has finite height $h$ satisfying $1\leq h \leq 10$. We denote by $\bD(\Brhat X)$ the (covariant) Dieudonn\'e module of $\Brhat X$. This has the structure of an $F$-crystal over $k$. It is free of rank $h$ over the ring $W$ of Witt vectors of $k$.

We denote by $\rH^2_\crys(X/W)_{<1}$ the maximal sub-$F$-crystal of $\rH^2_\crys(X/W)$ that has all slopes $<1$. 

\begin{proposition}\label{prop:crys-Br}
If $X$ is not supersingular, then there is a canonical isomorphism 
\[
	\rH^2_\crys(X/W)_{<1} \cong \bD(\Brhat X)
\]
of $F$-crystals over $k$.
\end{proposition}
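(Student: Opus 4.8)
The plan is to realize both sides of the asserted isomorphism as the Witt-vector cohomology $\rH^2(X, W\cO_X)$ of the structure sheaf, and to deduce the proposition by combining two classical comparison results: the description of the Dieudonn\'e module of an Artin--Mazur formal group in terms of Witt-vector cohomology, and Illusie's computation of the Frobenius slopes of crystalline cohomology via the de Rham--Witt complex.

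First I would recall that $\Brhat X$ is the Artin--Mazur formal group $\Phi^2_X$ attached to the functor in the statement, and that, by the Cartier--Dieudonn\'e theory of such formal groups, there is a canonical identification
\[
	\bD(\Brhat X) \,\cong\, \rH^2(X, W\cO_X)
\]
of the covariant Dieudonn\'e module with the cohomology of the sheaf of Witt vectors of $\cO_X$, compatibly with the operators $F$ and $V$. The hypothesis that $X$ is not supersingular enters precisely here: it guarantees that $\Brhat X$ has finite height $h$, equivalently that $\rH^2(X, W\cO_X)$ is a free $W$-module of finite rank $h$ rather than a non-finitely-generated $W[F]$-module. This finiteness is what promotes $\bD(\Brhat X)$ to a genuine $F$-crystal over $k$.

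Second I would invoke Illusie's slope spectral sequence
\[
	E_1^{ij} = \rH^j(X, W\Omega^i_X) \Rightarrow \rH^{i+j}_\crys(X/W),
\]
together with the theorem that, after inverting $p$, the term $\rH^j(X, W\Omega^i_X)$ computes exactly the part of $\rH^{i+j}_\crys(X/W)$ whose Frobenius slopes lie in $[i, i+1)$. Specializing to $(i,j) = (0,2)$ identifies $\rH^2(X, W\cO_X)$, up to isogeny, with the slope-$<1$ part of $\rH^2_\crys(X/W)$. To obtain the asserted isomorphism of $F$-crystals on the nose, I would use that $\rH^2_\crys(X/W)$ is torsion-free (being free of rank $22$ for a K3 surface), that $\rH^2(X, W\cO_X)$ is torsion-free of finite rank in the finite-height case, and that the natural edge map $\rH^2(X, W\cO_X) \to \rH^2_\crys(X/W)$ is injective with image the maximal subcrystal $\rH^2_\crys(X/W)_{<1}$ of slopes $<1$; the degeneration of the slope spectral sequence in this range and the integral Newton--Hodge decomposition then supply the integral comparison, compatibly with Frobenius.

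Composing the two identifications produces the canonical isomorphism $\bD(\Brhat X) \cong \rH^2_\crys(X/W)_{<1}$. The step demanding the most care is the passage from an isogeny statement to an honest isomorphism of $F$-crystals: one must track the integral lattices and verify Frobenius-compatibility rather than merely match rational slopes, and it is exactly the finite-height (non-supersingular) hypothesis that makes this possible. As a consistency check on the bookkeeping, the one-dimensional formal group $\Brhat X$ of height $h$ is isoclinic of slope $1/h$, while the slope-$<1$ part of the weight-two crystal $\rH^2_\crys(X/W)$ is isoclinic of slope $1/h$ and rank $h$; the two sides therefore agree in rank and Newton polygon, as they must.
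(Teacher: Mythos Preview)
Your argument is essentially the paper's: both identify $\bD(\Brhat X)$ with $\rH^2(X,W\cO_X)$ via Artin--Mazur and then use Illusie's de Rham--Witt theory to match this with the slope-$<1$ part of $\rH^2_\crys(X/W)$. The paper is slightly more direct: it quotes the integral direct-sum decomposition $\rH^2_\crys(X/W) = \rH^2(X,W\cO_X)\oplus\rH^1(X,W\Omega^1)\oplus\rH^0(X,W\Omega^2)$ of $F$-crystals from \cite[\S 7.2]{Illusie79} and then observes that the first summand has all slopes $<1$ (being the Dieudonn\'e module of a formal group) while the others have slopes $\geq i$ on $\rH^{2-i}(X,W\Omega^i)$ because $F$ is divisible by $p^i$ there; this sidesteps entirely your isogeny-to-isomorphism passage.

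One small correction to your closing sanity check: for a one-dimensional formal group of height $h$, the (covariant) Dieudonn\'e module is isoclinic of slope $1-1/h$, not $1/h$ (for $h=1$ this is $\hat{\mathbf G}_m$, whose Dieudonn\'e module has $F$ a unit, i.e.\ slope $0$), and correspondingly the slope-$<1$ piece of $\rH^2_\crys(X/W)$ has slope $1-1/h$. This does not affect your main argument.
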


\begin{proof}
By \cite[\S 7.2]{Illusie79} there is a canonical isomorphism of $F$-crystals
\begin{equation}\label{eq:dRW-decomposition}
	\rH^2_\crys(X/W) = \rH^2(X,\rW\cO_X) \oplus \rH^1(X, \rW\Omega^1_{X/k})
		\oplus \rH^0(X, \rW \Omega^2_{X/k}),
\end{equation}
coming from the de Rham-Witt complex, and by \cite[Cor 4.3]{ArtinMazur77} we have an isomorphism of $F$-crystals
\[
	\rH^2(X,\rW\cO_X) = \bD(\Brhat X).
\]
Since $\Brhat X$ is a formal group, the slopes of $\rH^2(X,\rW\cO_X) = \bD(\Brhat X)$ are $<1$. On the other hand, since $F$ is divisible by $p^i$ on $\rH^{2-i}(X,\rW\Omega^i_{X/k})$, the slopes of the other summands in (\ref{eq:dRW-decomposition}) are $\geq 1$. This proves the theorem.
\end{proof}

\subsection{Proof of Theorem \ref{thm:properties}}

\begin{proof}
Property (2) holds by definition, (3) is a formal consequence of the trace formula in $\ell$-adic cohomology
(see \emph{e.g.} \cite[\S 1]{Deligne74}), and (1) is part of the Weil conjectures \cite{Deligne72,Deligne74}. 

The other properties make use of crystalline cohomology. Property (4) is well, known. It follows for example from Mazur's proof of `Newton above Hodge' \cite{Mazur72,Mazur73} for liftable varieties with torsion-free cohomology, see \cite[\S2]{Mazur72}. Property (5) is a sharpening of a result of Yu and Yui \cite[Prop.~3.2]{YuYui08}. The argument is essentially the same as in \emph{loc.cit.}, we repeat it for completeness. 

 For a polynomial $Q=\prod (1-\gamma_i T) \in \bQ_p[T]$ we denote by $Q_{<0}$ the product
\[
	Q_{< 0} = \prod_{v_p(\gamma_i)<0} (1-\gamma_i T) \in \bQ_p[T].
\]
Let $K$ be the field of fractions of $W$. If $q=p^a$, then by Proposition \ref{prop:crys-Br} we have
\[
	L_{\trc,<0} := L_{\trc,<0}(X/\bF_q,T) = {\mathrm{det}}_K\big(1-F^aT, \,K\otimes_W\bD(\Brhat X)(1) \big),
\]
in $\bQ_p[T] \subset K[T]$. Since $\Brhat X$ is a one-dimensional formal group of finite height the crystal $\bD(\Brhat X)$ is indecomposable. It follows that the endomorphism $F^a$ of $\bD(\Brhat X)$ has an irreducible minimum polynomial over $K$, and hence $L_{\trc,<0} = P_{<0}^e$ for some irreducible $P_{<0}\in \bQ_p(T)$. Let $Q$ be an irreducible factor of $L_\trc$. Then $Q$ has a reciprocal root $\gamma$ with $v_p(\gamma)<0$, for otherwise the roots of $Q$ would be algebraic integers and hence roots of unity. In particular $Q_{<0}=P_{<0}$. Apparently any two irreducible factors of $L_\trc$ share a common root, hence $L_\trc = Q^e$. This proves (5).
\end{proof}

\section{CM theory of K3 surfaces}\label{sec:CM-theory}

This section collects results of Zarhin, Shafarevich and Rizov.

\subsection{Hodge theoretic aspects}

For a projective K3 surface $X$ over $\bC$ we denote by $\NS(X)$ its N\'eron-Severi group and by $T(X) \subset \rH^2(X,\bZ(1))$ the transcendental lattice, \emph{i.e.} $T(X)$ is the orthogonal complement of $\NS(X)$. We have a decomposition
\[
	\rH^2(X,\bQ(1)) = \NS(X)_\bQ \oplus T(X)_\bQ.
\]
The Hodge structure $T(X)_\bQ$ is irreducible \cite[Thm.~1.4.1]{Zarhin83}. 
The cup product pairing defines even symmetric bilinear forms on $\NS(X)$ and $T(X)$ of signature $(1,\rho-1)$ and $(2,20-\rho)$, with $\rho=\rk \NS(X)$.

\begin{proposition}Let $X$ be a projective K3 surface over $\bC$. Then the following are equivalent:
\begin{enumerate}
\item the Hodge group of $T(X)_\bQ$ is commutative,
\item $E:=\End_{\HS} T(X)_\bQ$ is a CM field and $\dim_E T(X)_\bQ = 1$.
\end{enumerate}
\end{proposition}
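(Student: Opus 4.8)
The plan is to reduce both conditions to the single structural statement that $E := \End_{\HS} T(X)_\bQ$ is a field with $\dim_E T(X)_\bQ = 1$, and then to extract the CM refinement. Write $V = T(X)_\bQ$, let $\psi$ be the symmetric polarization form, and let $G := \mathrm{Hdg}(V)$ be the Hodge group, a connected $\bQ$-subgroup of $\mathrm{SO}(V,\psi)$. I will use the standard fact from Mumford--Tate theory that the $\bQ$-subspaces of $V$ stable under $G$ are exactly the sub-Hodge-structures; since $V$ is irreducible (Zarhin, already quoted), $V$ is an irreducible $\bQ$-representation of $G$, so by Schur $E$ is a division algebra.

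For the implication $(1)\Rightarrow(2)$, assume $G$ is commutative. Being connected, $G$ is then a $\bQ$-torus, so its image in $\GL(V_{\bar\bQ})$ is diagonalizable and the $\bQ$-subalgebra $A \subseteq \End_\bQ(V)$ it generates is commutative and finite \'etale (hence semisimple). The $A$-submodules of $V$ coincide with the $G$-submodules, so $V$ is an irreducible faithful $A$-module; a nontrivial idempotent of $A$ would split $V$, so $A$ must be a field, and irreducibility over a field forces $\dim_A V = 1$. Thus $A$ is a field, $V \cong A$, and $E = \End_A(V) = A$, giving that $E$ is a field with $\dim_E V = 1$. To see $E$ is CM, decompose $V_\bC = \bigoplus_{\tau\colon E\hookrightarrow\bC} V_\tau$ into the one-dimensional eigenlines for the embeddings of $E$; then $V^{1,-1} = V_\phi$ for some $\phi$ and $V^{-1,1} = \overline{V^{1,-1}} = V_{\bar\phi}$, so $\bar\phi\neq\phi$ and $E$ is not totally real. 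The Rosati involution $\dagger$ defined by $\psi(ex,y)=\psi(x,e^\dagger y)$ is nontrivial: were $\dagger=\mathrm{id}$, distinct eigenlines $V_\tau$ would be mutually $\psi$-orthogonal, so the isotropic line $V^{1,-1}$ (for which $\psi(V^{1,-1},V^{1,-1})=0$ by Hodge type) would lie in the radical of $\psi$, contradicting nondegeneracy. Finally, $\dagger$ is a \emph{positive} involution, by positivity of the Hodge--Riemann form of the polarization, so Albert's classification of positive involutions on a number field identifies $E$ as a CM field with $\dagger$ its complex conjugation.

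The reverse implication $(2)\Rightarrow(1)$ is immediate: if $E$ is a field with $\dim_E V = 1$, then $V\cong E$ and the commutant of $E=\End_{\HS}(V)$ in $\End_\bQ(V)$ is $E$ itself; since $G$ centralizes $E$, we get $G \subseteq \Res_{E/\bQ}\mathbb{G}_m$, a $\bQ$-torus, whence $G$ is commutative.

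The routine linear algebra is the passage from commutativity of $G$ to the conclusion that $E$ is a field with $\dim_E V = 1$. The genuine content, and the step I expect to be most delicate, is upgrading ``$E$ not totally real, $\dagger\neq\mathrm{id}$'' to ``$E$ is CM'': this is exactly where the positivity of the Rosati involution must be invoked, since a merely nontrivial involution on a number field need not be complex conjugation (totally real fields can carry nontrivial involutions). I would therefore take care to state the positivity precisely---that the form $(a,b)\mapsto \tr_{E/\bQ}(a\,b^\dagger)$, suitably normalized by the polarization, is positive definite---so that Albert's classification applies and forces $E_0 = E^{\dagger}$ to be totally real with $E/E_0$ a totally imaginary quadratic extension.
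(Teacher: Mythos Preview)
Your argument is correct. The paper itself gives no proof here but simply refers to \cite[\S 2]{Zarhin83}; what you have written is a self-contained account of the relevant portion of Zarhin's analysis, so there is no separate ``paper's approach'' to compare against.

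Two small comments. The passage from ``connected commutative'' to ``torus'' tacitly uses that the Hodge group of a polarizable Hodge structure is reductive; this is standard but worth saying. More substantively, you are right that positivity of $\dagger$ is the crux, and your instinct for how to justify it is the correct one: since every $e\in E$ commutes with the Weil operator $C$ (being a Hodge endomorphism), the $\psi$-adjoint $e^\dagger$ coincides with the adjoint of $e$ for the definite Hodge--Riemann form $(x,y)\mapsto \pm\psi(Cx,y)$ on $V_\bR$, and positivity of $\dagger$ then follows formally. With positivity in hand, your observation that $E$ admits a non-real embedding (via $\bar\phi\neq\phi$) already forces the CM conclusion through Albert's classification, so the separate verification that $\dagger\neq\id$ is correct but not strictly needed.
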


\begin{proof}\cite[\S 2]{Zarhin83}
\end{proof}

\begin{definition}\label{def:CM}
If $X$ satisfies the equivalent conditions (1) and (2) of the Proposition, then we say that $X$ is a K3 surface \emph{with CM} (by $E$).
\end{definition}

\begin{remark}Another equivalent condition is that $T(X)_\bQ$ is contained in the Tannakian category of Hodge structures generated by the $\rH^1$ of CM abelian varieties.
\end{remark}

If $E$ is a CM field, then we denote the canonical complex conjugation of $E$ by $z\mapsto \bar z$,
 and its fixed field by $E_0$. We have $[E:E_0]=2$, and $E_0$ is a totally real number field.
 
\begin{proposition}Let $X$ be a K3 surface with CM by $E$. Then 
\begin{enumerate}
\item $ax \cdot y = x \cdot \bar{a} y$ for all $a\in E$ and $x,y\in T(X)_\bQ$;
\item the group of Hodge isometries of $T(X)_\bQ$ is $\ker(\Nm \colon E^\times \to E_0^\times)$.
\end{enumerate}
\end{proposition}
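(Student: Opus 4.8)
The plan is to treat the two statements in turn, the first being the substantial one and the second following almost formally from it.

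For (1) I would first produce the adjoint involution attached to the cup product form. Since $T(X)\subset \rH^2(X,\bZ(1))$, the cup product $x\cdot y$ is a morphism of Hodge structures $T(X)_\bQ\otimes T(X)_\bQ\to\bQ(0)$, and it is non-degenerate; hence it identifies $T(X)_\bQ$ with its dual as a Hodge structure. For $a\in E=\End_{\HS}T(X)_\bQ$ the endomorphism adjoint to $a$, transported through this self-duality, is again a morphism of Hodge structures and therefore lies in $E$; thus there is a unique $a^{\ast}\in E$ with $ax\cdot y=x\cdot a^{\ast}y$ for all $x,y$. A direct check using bilinearity and the symmetry of the cup product shows that $a\mapsto a^{\ast}$ is a $\bQ$-algebra involution of $E$, i.e. a field involution. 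It then suffices to identify $\ast$ with complex conjugation.

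For this identification I would invoke positivity coming from the polarization. The Hodge structure $T(X)_\bQ$ is polarizable, and with $C$ the Weil operator the $\bR$-bilinear form $S(x,y):=\pm\,(x\cdot Cy)$ on $T(X)_\bR$ is symmetric and positive definite (Hodge--Riemann; here one uses that the cup product has signature $(2,20-\rho)$ and that $C$ acts by $-1$ on $T^{2,0}\oplus T^{0,2}$ and by $+1$ on the $(1,1)$-part, for the appropriate sign). Because $E$ acts by Hodge endomorphisms it commutes with $C$, and because every element of $E$ is $\bQ$-rational it commutes with complex conjugation on $T(X)_\bC$; from these two facts one deduces $S(ax,y)=S(x,a^{\ast}y)$, so $\ast$ is also the adjoint of the $E$-action with respect to the positive definite form $S$. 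Thus $\ast$ is a positive involution of $E$. Writing $E\otimes_\bQ\bR\cong\prod_{i=1}^{g}\bC_i$ and using that $T(X)_\bR$ is free of rank one over this ring, positivity of $S$ forces $\ast$ to preserve each factor $\bC_i$ and to act there as complex conjugation rather than the identity, since the identity would make some $S(av,av)$ negative. Hence $\ast$ is complex conjugation on all of $E$, which is exactly $ax\cdot y=x\cdot\bar{a}y$. (Equivalently, one may quote the classical fact that the only positive involution of a CM field is complex conjugation.)

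Statement (2) is then immediate. Every Hodge endomorphism of $T(X)_\bQ$ is multiplication by some $a\in E$, invertible precisely when $a\in E^{\times}$, so the Hodge automorphism group is $E^{\times}$ acting by multiplication. Using (1),
\[
	ax\cdot ay = x\cdot \bar{a}\,a\,y
\]
for all $x,y$, and since $T(X)_\bQ$ is free of rank one over $E$ the form is non-degenerate, so multiplication by $a$ is an isometry if and only if $a\bar{a}=1$, i.e. $\Nm_{E/E_0}(a)=1$. This identifies the group of Hodge isometries with $\ker(\Nm\colon E^{\times}\to E_0^{\times})$, as claimed. I expect the main obstacle to be the positivity step in (1): pinning down the signs in the Hodge--Riemann relations and verifying carefully that the adjoint of the $E$-action with respect to $S$ coincides with $\ast$, so that the classification of positive involutions on a CM field applies. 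Everything else is formal once the adjoint involution has been shown to land back in $E$.
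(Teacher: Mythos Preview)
Your proof is correct. The first half of part (1)---producing the adjoint $a\mapsto a^\ast$ and recognizing it as a field involution of $E$---matches the paper exactly. Where you diverge is in the identification $a^\ast=\bar a$: you go through the Hodge--Riemann form $S$ and the classification of positive involutions on a CM field, whereas the paper simply looks at the induced action on the one-dimensional space $\rH^{0,2}(X)$. Since $h^{2,0}=1$ for a K3 surface, $E$ acts on $\rH^{2,0}$ through a single complex embedding $\epsilon\colon E\hookrightarrow\bC$, and on $\rH^{0,2}=\overline{\rH^{2,0}}$ through $\bar\epsilon$; the pairing between $\rH^{2,0}$ and $\rH^{0,2}$ then gives $\epsilon(a)=\overline{\epsilon(\varphi(a))}$, so $\varphi(a)=\bar a$ immediately. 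This is shorter and avoids the sign bookkeeping in the polarization that you flagged as the likely obstacle. Your route, on the other hand, is the standard one for CM abelian varieties and would work verbatim for any irreducible polarized Hodge structure with CM, not just one with a one-dimensional $(p,q)$-piece. Part (2) is handled identically in both.
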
 

\begin{proof}The cup product pairing induces an isomorphism
\[
	T(X)_\bQ \isomto \Hom(T(X)_\bQ,\bQ)
\]
of Hodge structures, and hence the action of $E$ on $T(X)$ induces an `adjoint' homomorphism $\varphi\colon E\to E$ such that $ax\cdot y = x\cdot \varphi(a)y$. Considering the induced action on $\rH^{0,2}(X)$ one sees that $\varphi(a)=\bar{a}$, which proves the first assertion. The second is an immediate consequence of the first. 
\end{proof}

\subsection{Arithmetic aspects: the Main Theorem of CM}\label{subsec:main-thm-CM}
Let  $X$ be a K3 surface over $\bC$ with CM by $E$.
Consider the algebraic torus $G$ over $\bQ$ which is the kernel of the norm map $E^\times \to E_0^\times$ (seen as map of tori over $\bQ$). Then $G(\bQ)$ is the group of $E$-linear isometries of $T(X)_\bQ$. 

If $X$ is defined over a subfield $k\subset \bC$, then we have canonical isomorphisms
\[
	\rH^2_\et(X_{\bar k}, \bQ_\ell(1)) = \rH^2(X(\bC),\bQ(1)) \otimes_{\bQ} \bQ_\ell.
\]
Since the Galois action on the left-hand side respects the intersection pairing and the subgroup $\NS(X_{\bar k})=\NS(X_{\bC})$, we see that both $\Gal_k$ and $G(\bQ_\ell)$ act on $T(X)_{\bQ_\ell}$.
If we denote by $\bA_f$  the finite ad\`eles of $\bQ$, \emph{i.e.} $\bA_f = \bQ\otimes \hat\bZ$, then we obtain actions of $\Gal_k$ and $G(\bA_f)$ on $T(X)_{\bA_f}$.

\begin{theorem}[Rizov, Main theorem of CM for K3 surfaces]\label{thm:main-thm-CM} There exists a number field $k \subset \bC$ containing $E$ such that 
\begin{enumerate}
\item $X$ is defined over $k$,
\item the Galois action on $T(X)_{\bA_f}$ factors over a map $\rho\colon \Gal_k \to G(\bA_f)$
\item the diagram
\[
\begin{tikzcd}
\Gal_k \arrow{d}{\rho} \arrow[tail]{r}
	& \Gal_E \arrow[two heads]{r}{\text{CFT}}
	& \bA_{E,f}^\times/E^\times \arrow{d}{z \mapsto \bar z/z}\\
	G(\bA_f) \arrow{rr} 
	& 
	& G(\bA_f)/G(\bQ)
\end{tikzcd}
\]
commutes.
\end{enumerate}
\end{theorem}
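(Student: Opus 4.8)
The plan is to reduce the statement to the classical Main Theorem of complex multiplication for abelian varieties by means of the Kuga--Satake construction, and then to carry out an explicit computation of the resulting reflex norm. There are three things to establish: that $X$ descends to a number field $k\supseteq E$; that, after possibly enlarging $k$, the Galois representation on $T(X)_{\bA_f}$ lands in $G(\bA_f)$; and that the resulting $\rho$ satisfies the reciprocity law encoded in the diagram. An equivalent and more structural route is to invoke the theory of canonical models of Shimura varieties for the torus $G$, but I find the reduction to abelian varieties more concrete.

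For the descent, I would use that the period point attached to $X$ is a special (CM) point in the relevant orthogonal-type Shimura variety, hence is algebraic: since $\End_{\HS}T(X)_\bQ=E$ is a field with $\dim_E T(X)_\bQ=1$, the Hodge structure has no moduli, so $X$ lies in a $0$-dimensional stratum of the moduli space of quasi-polarized K3 surfaces and is therefore defined over $\bar\bQ$. I then enlarge the field of definition to a number field $k$ containing $E$ over which $X$ and all the Hodge endomorphisms in $E$ are defined. The point of this last requirement is that these endomorphisms, being classes on $X\times X$ of Kuga--Satake type, are absolutely Hodge, so their $\ell$-adic realizations are $\Gal_k$-equivariant; this is exactly what forces $\Gal_k$ to commute with the $E$-action on $T(X)_{\bA_f}$. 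Since $\dim_E T(X)_\bQ=1$, every $E$-linear isometry of $T(X)_{\bA_f}$ is multiplication by an element of $(E\otimes\bA_f)^\times$ whose norm to $E_0$ is $1$, that is, by an element of $G(\bA_f)$; as Galois preserves both the $E$-structure and the cup-product pairing, this produces the homomorphism $\rho\colon\Gal_k\to G(\bA_f)$ of (2).

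For the reciprocity law (3), I would pass to the Kuga--Satake abelian variety $A$ attached to $T(X)_\bQ$. Because the Mumford--Tate group $G$ is a torus, $A$ has complex multiplication, and the Kuga--Satake correspondence embeds $T(X)$, compatibly with Hodge structures, into a tensor construction on $\rH^1(A)$. The essential input is that this correspondence is \emph{motivic}, so that the embedding is also $\Gal_k$-equivariant on adelic realizations; this is known from Deligne's theory of absolute Hodge cycles, and in the arithmetic form needed here by Madapusi Pera. Granting this, the classical Main Theorem of CM for $A$ (Shimura--Taniyama, in the formulation of Deligne and Milne) computes the $\Gal_k$-action on $\rH^1(A)_{\bA_f}$ through the Artin reciprocity map followed by the reflex norm of the CM datum. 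Transporting this along the Kuga--Satake embedding gives the action on $T(X)_{\bA_f}$ in terms of the reflex norm $N_\mu\colon\bA_{E,f}^\times\to G(\bA_f)$ attached to the Hodge cocharacter $\mu$ of $(G,h)$.

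It then remains to identify $N_\mu$ with the map $z\mapsto\bar z/z$. Here one uses the explicit shape of $\mu$: writing $E\otimes\bC=\prod_{\tau\colon E\to\bC}\bC$, the line $\rH^{2,0}\subset T(X)_\bC$ is the eigenspace of a single embedding $\tau_0$, its conjugate $\rH^{0,2}$ is the $\bar\tau_0$-eigenspace, and the remaining embeddings contribute to $T^{0,0}$; thus $\mu$ is supported, with opposite exponents, on the pair $(\tau_0,\bar\tau_0)$, and the reflex field is $E$ embedded via $\tau_0$ (consistent with the appearance of $\Gal_E$ and $\bA_{E,f}^\times$ in the diagram). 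A direct computation of $N_\mu=\Nm_{E/\bQ}\circ\Res_{E/\bQ}(\mu)$ on $\bA_{E,f}^\times$ then collapses, because of the norm-one condition cutting out $G$, to $z\mapsto\bar z/z$ up to the overall sign fixed by the normalization of the Artin map. I expect this reflex-norm computation, together with the Galois-equivariance of the Kuga--Satake correspondence over $k$, to be the main obstacle: the former requires tracking $\mu$ through the spin representation defining $A$ and matching conventions (geometric versus arithmetic Frobenius) so as to land on $\bar z/z$ rather than $z/\bar z$, while the latter is the genuinely arithmetic, rather than purely Hodge-theoretic, content of the theorem.
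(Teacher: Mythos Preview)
The paper does not prove this statement at all: its entire proof is a citation to Rizov \cite[Cor.~3.9.2]{Rizov10}, together with a remark that Rizov's definition of CM needs the hypothesis $\dim_E T(X)_\bQ=1$ added. Your outline is essentially a reconstruction of Rizov's argument, which indeed proceeds via the Kuga--Satake construction and the classical Main Theorem of CM for abelian varieties; so at the level of underlying mathematics your approach coincides with what the paper defers to. The alternative route you mention---identifying the canonical model of the zero-dimensional Shimura variety for the torus $G$ with the moduli-theoretic model---is exactly the content of the Remark the paper places immediately after the theorem.

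Two small comments on your sketch. First, you say the Kuga--Satake correspondence is ``motivic''; what is actually known and actually needed is that it is absolutely Hodge (Deligne), which already yields Galois-equivariance of the adelic comparison after a finite extension of $k$---you effectively say this in the next clause, but the word ``motivic'' overstates what is available. Second, your descent argument (``the Hodge structure has no moduli, so $X$ lies in a $0$-dimensional stratum'') is fine for producing a model over $\bar\bQ$, but one should also note that the polarization class and the endomorphisms are defined over a finite extension; you do this, and it is precisely the step that forces $\rho$ to land in $G(\bA_f)$. The reflex-norm identification $N_\mu(z)=\bar z/z$ is correct with the geometric normalization of Artin reciprocity used in the paper; as you anticipate, the sign is sensitive to conventions, and matching it is the only genuinely delicate bookkeeping in the argument.
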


\begin{proof}
This is a reformulation of \cite[Cor.~3.9.2]{Rizov10}. Note however that in \cite[1.4.3]{Rizov10} the definition of complex multiplication needs to be corrected (the condition $\dim_E T_\bQ=1$ is missing) for proof and statement to be correct.
\end{proof}

%
%
\begin{remark}
A priori, the moduli space of polarized complex K3 surfaces has two natural models over $\bQ$: the `canonical model' of the theory of Shimura varieties \cite[\S 3]{Deligne71}, which is defined in terms of the Galois action on special points, and the model coming from the moduli interpretation. The essential content of Theorem \ref{thm:main-thm-CM} is that these two models coincide.  (See also \cite[\S 3]{MadapusiPera15}).
\end{remark}

\section{Existence of K3 surface with CM by a given CM field}
\label{sec:CM-existence}

In this section we prove Theorem \ref{thm:K3-with-given-CM-field}. By the surjectivity of the period map for K3 surfaces, this reduces  to a problem about quadratic forms over $\bQ$.

\subsection{Invariants of quadratic forms over $\bQ$}
We quickly recall some basic facts about quadratic forms over $\bQ$. We refer to \cite{Cassels78,Scharlau85,Serre70} for details and proofs.
Let $k$ be a field of characteristic different from $2$. A \emph{quadratic space} over $k$ is a pair $V=(V,q)$ consisting of a finite-dimensional vector space over $k$ and a non-degenerate symmetric bilinear form $q\colon V\times V\to k$. To such a space one associates the following invariants:
\begin{enumerate}
\item the \emph{dimension} $\dim(V)$;
\item the \emph{determinant} $\det(V) \in k^\times/k^{\times2}$;
\item the \emph{Hasse invariant} $w(V) \in \Br(k)[2]$.
\end{enumerate}
Any form $V$ over $k$ is isomorphic to a diagonal form $\langle \alpha_1,\ldots,\alpha_n\rangle$ with $n=\dim V$, and for such a form the invariants are
\begin{eqnarray*}\label{eq:diagonal-det}
	\det(V) = \prod_i \alpha_i &\in&  k^\times/k^{\times2}, \\ \label{eq:diagonal-hasse}
	w(V) = \sum_{i<j} \,(\alpha_i,\alpha_j)_k &\in& \Br(k)[2],
\end{eqnarray*}
where $(\alpha,\beta)_k$ denotes the class of the quaternion algebra generated by $i$ and $j$ with $i^2=\alpha$, $j^2=\beta$, $ij=-ji$.

We denote the orthogonal sum of two quadratic spaces by $V\oplus W$. 

\begin{lemma}\label{lemma:additivity} Let $V$ and $W$ be quadratic spaces over $k$. Then
\begin{enumerate}
\item $\det(V\oplus W) = \det(V)\det(W)$;
\item $w(V\oplus W) = w(V) + w(W) + (\det(V),\det(W))_k$. 
\end{enumerate}
\end{lemma}

\begin{proof}
This follows from the above formulas for the determinant and Hasse invariant of a diagonal quadratic form, and the bilinearity of $(\alpha,\beta)_k$.
\end{proof}

\begin{theorem}\label{thm:local-classification}
Two forms over $\bQ_p$ are isomorphic if and only if they have the same dimension, determinant and Hasse invariant. For every $d\geq 3$, $\delta \in \bQ_p^\times/\bQ_p^{\times2}$ and $w\in \Br(\bQ_p)[2]$ there exists a form of dimension $d$,  determinant $\delta$ and Hasse invariant $w$. \qed
\end{theorem}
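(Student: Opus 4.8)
The plan is to prove this classical local classification theorem (Theorem~\ref{thm:local-classification}) in two parts: the injectivity of the invariants (two forms with equal $\dim$, $\det$, $w$ are isomorphic), and the surjectivity (realization of arbitrary invariant data in dimension $d \geq 3$). I would proceed by induction on $\dim V = n$, reducing everything to the arithmetic of $\bQ_p^\times/\bQ_p^{\times 2}$ and the Hilbert symbol $(\alpha,\beta)_p$.

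For the \emph{injectivity} statement, first I would handle the base case $n=1$: a form $\langle\alpha\rangle$ has $\det = \alpha$, so the determinant alone recovers $\alpha \in \bQ_p^\times/\bQ_p^{\times2}$ and hence the form. For the inductive step, suppose $V$ and $V'$ have the same dimension, determinant and Hasse invariant. The key lemma to establish is that $V$ and $V'$ represent a common nonzero value $c \in \bQ_p^\times$: this is where the local arithmetic enters, since over $\bQ_p$ the group $\bQ_p^\times/\bQ_p^{\times2}$ is finite (of order $4$ for $p$ odd, $8$ for $p=2$) and one can analyze which elements a form of given invariants represents via the Hilbert symbol. Once $V$ and $V'$ both represent $c$, I would write $V \cong \langle c\rangle \oplus W$ and $V' \cong \langle c\rangle \oplus W'$ with $\dim W = \dim W' = n-1$. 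By the additivity of Lemma~\ref{lemma:additivity}, the fact that $V,V'$ share all three invariants forces $W$ and $W'$ to share all three invariants as well (one solves for $\det W$ and $w(W)$ in terms of $\det V$, $w(V)$ and $c$). The inductive hypothesis then gives $W \cong W'$, whence $V \cong V'$ by Witt cancellation, which I would invoke as a standard fact over fields of characteristic $\neq 2$.

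For the \emph{surjectivity} statement in dimension $d \geq 3$, given target data $(\delta, w)$ I would construct an explicit diagonal form realizing it. The strategy is to build the form as $\langle \alpha_1, \ldots, \alpha_{d}\rangle$ by choosing the $\alpha_i$ so that the cumulative determinant and Hasse invariant, computed via the diagonal formulas and Lemma~\ref{lemma:additivity}, hit the prescribed $(\delta, w)$. The constraint $d \geq 3$ is precisely what guarantees enough freedom: with at least three slots one can first fix the determinant using one coordinate and then adjust the Hasse invariant independently using the remaining coordinates, exploiting that the Hilbert symbol $(\cdot,\cdot)_p \colon \bQ_p^\times/\bQ_p^{\times2} \times \bQ_p^\times/\bQ_p^{\times2} \to \{\pm 1\}$ is a nondegenerate symmetric bilinear pairing on the finite group $\bQ_p^\times/\bQ_p^{\times2}$. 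Concretely, I would show that for a fixed determinant class both values of the Hasse invariant in $\Br(\bQ_p)[2] \cong \{\pm1\}$ are attainable once $d \geq 3$, by producing two forms differing by a suitable ``spinor-type'' modification (replacing a pair $\langle a,b\rangle$ by an isometric-determinant pair with the opposite quaternion class).

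The main obstacle I anticipate is the common-value lemma in the injectivity argument: proving that two $\bQ_p$-forms with identical invariants represent a common nonzero scalar is the genuinely arithmetic heart of the theorem and requires the finiteness and explicit structure of $\bQ_p^\times/\bQ_p^{\times2}$ together with the computation of the Hilbert symbol. The case $p=2$ is more delicate than $p$ odd because $|\bQ_2^\times/\bQ_2^{\times2}| = 8$ and the Hilbert symbol is correspondingly more intricate. Since this is a standard result, I would cite \cite{Serre70} or \cite{Cassels78} for the detailed symbol computations rather than reproduce them, and the proof here consists mainly in assembling these local facts through the additivity of Lemma~\ref{lemma:additivity} and Witt cancellation.
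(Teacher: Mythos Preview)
The paper does not give a proof of Theorem~\ref{thm:local-classification}: it is stated with a terminal \qed\ and is one of the classical facts the paper quotes from the references \cite{Cassels78,Scharlau85,Serre70}. Your proposal is therefore not comparable to anything in the paper itself.

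That said, what you sketch is essentially the standard proof one finds in \cite[Ch.~IV, Th\'eor\`eme~7]{Serre70}: induction on the dimension for uniqueness, using a common-value lemma plus Witt cancellation and the additivity of the invariants (your Lemma~\ref{lemma:additivity}); and for existence in dimension $d\geq 3$, an explicit diagonal construction using the nondegeneracy of the Hilbert symbol on $\bQ_p^\times/\bQ_p^{\times 2}$. Your identification of the common-value step as the arithmetic heart, and of $p=2$ as the delicate case, is accurate. So your outline is correct and aligns with the classical argument; the paper simply defers to the literature rather than reproducing it.
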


If $k=\bQ$ then a fourth invariant is given by the signature of the form $V_\bR$. 

\begin{theorem}\label{thm:global-classification}
Two forms over  $\bQ$  are isomorphic if and only if they have the same  signature, determinant, and Hasse invariant. All forms $V$ over $\bQ$ of signature $(r,s)$ satisfy
\begin{enumerate}
\item the sign of $\delta(V)$ is $(-1)^s$;
\item the image of $w(V)$ in $\Br(\bR)[2]=\bZ/2\bZ$ is $s(s-1)/2 \bmod 2$.
\end{enumerate}
If $r+s\geq 3$, and if $\delta$ and $w$ satisfy (1) and (2) above, then there exists a quadratic space over $\bQ$ with signature $(r,s)$, determinant $\delta$ and Hasse invariant $w$.\qed
\end{theorem}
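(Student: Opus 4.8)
The plan is to deduce everything from the local theory recalled in Theorem~\ref{thm:local-classification}, together with the two standard global inputs of the arithmetic of quadratic forms over $\bQ$ (see \cite{Serre70,Cassels78,Scharlau85}): the \emph{Hasse--Minkowski theorem}, that two forms over $\bQ$ are isomorphic if and only if they are isomorphic over every completion $\bQ_v$, with $v$ running over the finite places and $\infty$; and the fundamental exact sequence of class field theory
\[
0 \to \Br(\bQ) \to \bigoplus_v \Br(\bQ_v) \to \bQ/\bZ \to 0,
\]
whose $2$-torsion part supplies both the injectivity of $w\mapsto (w_v)_v$ and the reciprocity relation $\sum_v w_v=0$. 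I also use that $\bQ^\times/\bQ^{\times2}\to\bigoplus_v \bQ_v^\times/\bQ_v^{\times2}$ is injective, so that a global determinant is pinned down by its localizations.

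For the classification statement, suppose $V$ and $V'$ have the same signature, determinant and Hasse invariant. At each finite place $v$ they then have equal dimension, determinant $\delta_v$ and local Hasse invariant $w_v$ (the localizations of the global data), so $V_{\bQ_v}\cong V'_{\bQ_v}$ by Theorem~\ref{thm:local-classification}. At $\infty$ they have equal signature, hence $V_\bR\cong V'_\bR$ by Sylvester's law of inertia. Being isomorphic over every $\bQ_v$, they are isomorphic over $\bQ$ by Hasse--Minkowski. The converse, that isomorphic forms share these invariants, is immediate, using Lemma~\ref{lemma:additivity} if one computes on diagonalizations.

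The necessity of constraints (1) and (2) is a direct computation over $\bR$. Diagonalizing $V_\bR\cong\langle 1,\dots,1,-1,\dots,-1\rangle$ with $r$ positive and $s$ negative entries, the determinant is $(-1)^s$, giving (1). For (2), the real Hilbert symbol $(\alpha,\beta)_\bR$ is nontrivial exactly when both $\alpha,\beta<0$, so in $w_\infty(V)=\sum_{i<j}(\alpha_i,\alpha_j)_\bR$ the nontrivial contributions come precisely from the $\binom{s}{2}$ pairs of negative entries; hence the image of $w(V)$ in $\Br(\bR)[2]$ is $s(s-1)/2\bmod 2$.

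For the existence statement I prescribe a local form at every place. At $\infty$ I take the real form of signature $(r,s)$; by (1) and (2) its real determinant and Hasse invariant agree with the localizations $\delta_\infty$ and $w_\infty$ of the given $\delta$ and $w$. At each finite $v$, since $n=r+s\geq 3$, Theorem~\ref{thm:local-classification} furnishes a form $V_v$ over $\bQ_v$ of dimension $n$, determinant $\delta_v$ and Hasse invariant $w_v$, and for almost all $v$ these data are unramified so $V_v$ is the standard form. To glue this family into a global form I invoke the existence (realization) half of the Hasse principle: a family of local forms of common rank $n$, almost all standard, is the localization of a single global form once the product formula $\sum_v w_v(V_v)=0$ on local Hasse invariants holds. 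Here reciprocity enters: the finite terms are the localizations $w_v$ of the global class $w$, while the archimedean term equals $s(s-1)/2\bmod 2=w_\infty$ by (2), so $\sum_v w_v(V_v)=\sum_v w_v=0$ by the fundamental exact sequence. The resulting global form has signature $(r,s)$ by construction, and its global determinant and Hasse class localize everywhere to those of $\delta$ and $w$, hence coincide with $\delta$ and $w$ by the injectivity statements. I expect this existence direction to be the crux: the hypothesis $r+s\geq 3$ is essential, as it is exactly what makes arbitrary local invariants realizable at the finite places (in ranks $1$ and $2$ the Hasse invariant is constrained by the determinant), and the delicate point is that the signature may be prescribed freely subject only to the product formula, which in turn holds precisely because of reciprocity together with constraint (2).
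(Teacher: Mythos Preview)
The paper does not prove this theorem: it is stated as a classical fact with a bare \qed, and the surrounding subsection explicitly refers the reader to \cite{Cassels78,Scharlau85,Serre70} for details and proofs. So there is no ``paper's own proof'' to compare against.

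That said, your sketch is correct and is essentially the standard argument one finds in those references (notably Serre, \emph{Cours d'arithm\'etique}, Ch.~IV): Hasse--Minkowski reduces the classification to the local one of Theorem~\ref{thm:local-classification}; the constraints (1) and (2) are read off from a real diagonalization; and existence is obtained by realizing the prescribed local invariants subject to Hilbert reciprocity. One small point of phrasing: what you call ``the existence (realization) half of the Hasse principle'' is usually proved not by an abstract gluing of local forms but by an explicit inductive construction over $\bQ$ (using Dirichlet's theorem to adjust finitely many local invariants at once). Your formulation is equivalent, but if you want the argument to be self-contained you would need to justify that step rather than invoke it as a known principle.
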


Finally, we will need the invariants of $\Lambda_{K3,\bQ}=\bQ\otimes_\bZ \Lambda_{K3}$, which are as follows.

\begin{lemma}\label{lemma:invariants-K3} $\det(\Lambda_{K3,\bQ})=-1$ and $w(\Lambda_{K3,\bQ})\in \Br(\bQ)[2]$ is the class of the quaternion algebra $(-1,-1)_\bQ$.
\end{lemma}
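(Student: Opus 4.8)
The plan is to compute the invariants of the K3 lattice directly from its explicit decomposition. Recall that the K3 lattice is
\[
	\Lambda_{K3} = U^{\oplus 3} \oplus E_8(-1)^{\oplus 2},
\]
where $U$ is the hyperbolic plane and $E_8(-1)$ is the negative-definite $E_8$ lattice. After tensoring with $\bQ$, I would compute $\det$ and $w$ of each summand and then assemble them using the additivity formulas of Lemma \ref{lemma:additivity}.

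First I would handle the hyperbolic plane. Over $\bQ$ the form $U_\bQ$ is $\langle 1,-1\rangle$, so $\det(U_\bQ)=-1$ and $w(U_\bQ)=(1,-1)_\bQ=0$, the latter because $(1,\beta)_\bQ$ is always trivial. Hence $U_\bQ^{\oplus 3}$ has $\det=(-1)^3=-1$, and by Lemma \ref{lemma:additivity}(2) its Hasse invariant is a sum of terms $(\det,\det)_\bQ=(-1,-1)_\bQ$ coming from the cross terms; counting the $\binom{3}{2}=3$ pairs of hyperbolic planes gives $w(U_\bQ^{\oplus 3})=3\,(-1,-1)_\bQ=(-1,-1)_\bQ$ in $\Br(\bQ)[2]$. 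Next I would treat $E_8(-1)_\bQ$. Since $E_8$ is unimodular of rank $8$, $\det(E_8(-1)_\bQ)=(-1)^8=1$, and being negative-definite its signature is $(0,8)$. By the structure of unimodular even forms, $E_8(-1)_\bQ$ is isometric over $\bQ$ to $\langle -1\rangle^{\oplus 8}$ (both are the unique negative-definite form of determinant $1$ and the relevant Hasse invariant), from which $w(E_8(-1)_\bQ)=\binom{8}{2}(-1,-1)_\bQ=28\,(-1,-1)_\bQ=0$.

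Finally I would assemble the pieces. The total determinant is $\det(\Lambda_{K3,\bQ})=(-1)\cdot 1 \cdot 1 = -1$, giving the first claim. For the Hasse invariant I apply Lemma \ref{lemma:additivity}(2) repeatedly: the individual summand contributions are $(-1,-1)_\bQ$ from $U_\bQ^{\oplus 3}$ and $0$ from each $E_8(-1)_\bQ$ copy, and the cross terms $(\det_i,\det_j)_\bQ$ all vanish because whenever a factor has trivial determinant the symbol is $0$, and the surviving cross terms pair determinants equal to $\pm 1$ against $+1$. Collecting, $w(\Lambda_{K3,\bQ})=(-1,-1)_\bQ$, as asserted. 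The main obstacle here is bookkeeping rather than ideas: one must be careful to track the $2$-torsion cross terms in the Hasse invariant correctly and to justify the rational isometry $E_8(-1)_\bQ\cong\langle -1\rangle^{\oplus 8}$, after which the computation is routine. Alternatively, one can avoid the $E_8$ diagonalization entirely by noting that $E_8(-1)_\bQ$ has trivial determinant and even signature $(0,8)$, so its image in $\Br(\bR)[2]$ is $8\cdot 7/2 = 28 \equiv 0$, and checking triviality at each finite prime via Theorem \ref{thm:local-classification}.
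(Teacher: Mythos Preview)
Your proof is correct and takes essentially the same approach as the paper: both use the decomposition $\Lambda_{K3}\cong U^{\oplus 3}\oplus E_8(-1)^{\oplus 2}$, diagonalize over $\bQ$, and read off the invariants from the diagonal formulas (you organize the bookkeeping via Lemma~\ref{lemma:additivity}, the paper just says ``compute an orthogonal basis''). The only point to tighten is your justification of $E_8(-1)_\bQ\cong\langle-1\rangle^{\oplus 8}$, which as written is circular; a clean way is to observe that over $\bZ_p$ for $p$ odd any unimodular lattice diagonalizes with unit entries (so $w=0$ there), and over $\bZ_2$ the even unimodular $E_8(-1)$ splits as four hyperbolic planes, giving $w=\binom{4}{2}(-1,-1)_{\bQ_2}=0$.
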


\begin{proof}We have $\Lambda_{K3} \cong (-E_8) \oplus (-E_8) \oplus U \oplus U \oplus U$ where $U$ is the standard hyperbolic plane. Using this explicit description, one computes (over $\bQ$) an orthogonal basis, and  computes the invariants using the formula for diagonal forms.
\end{proof}

\subsection{The form $q_\lambda$}

Let $E$ be a CM field with maximal totally real subfield $E_0$. Put $d:= [E_0:\bQ]$.
Denote by $z\mapsto \bar{z}$  the complex conjugation on $E$. For $\lambda \in E_0^\times$ the map
\[
	q_\lambda\colon E\times E \mapsto \bQ,\, (x,y) \mapsto \tr_{E_0/\bQ}(\lambda x\bar{y})
\]
is a non-degenerate symmetric bilinear form over $\bQ$. 

We denote the discriminant of the number field $E$ by $\Delta(E/\bQ)$.

\begin{lemma}\label{lemma:disc-of-q}
$\det(q_\lambda) = (-1)^d \,\Delta(E/\bQ)$ in $\bQ^\times/\bQ^{\times 2}$.
\end{lemma}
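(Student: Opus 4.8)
The plan is to compare $q_\lambda$ with the ordinary trace form on $E$ and to track how the determinant changes under the two modifications that distinguish them: the twist by $\lambda$ and the presence of complex conjugation. Throughout, all Gram determinants are taken in $\bQ^\times/\bQ^{\times2}$, so I may freely discard perfect squares. The starting point is the classical fact that the trace form $b_0(x,y) = \tr_{E/\bQ}(xy)$ on $E$, viewed as a $\bQ$-vector space of dimension $2d$, has Gram determinant equal to the field discriminant: $\det(b_0) = \Delta(E/\bQ)$ in $\bQ^\times/\bQ^{\times2}$. This is the definition of $\Delta(E/\bQ)$ for an integral basis, and for an arbitrary $\bQ$-basis it holds up to the square of the change-of-basis determinant.

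First I would introduce the intermediate form $\tilde q(x,y) = \tr_{E/\bQ}(x\bar y)$, that is, $q_\lambda$ with $\lambda=1$. Writing $c\colon E\to E$ for complex conjugation (a $\bQ$-linear involution) and fixing a $\bQ$-basis $(e_i)$ with associated matrix $C$ of $c$, one computes $\tilde q(e_i,e_j) = \tr_{E/\bQ}(e_i\,c(e_j)) = (G_0 C)_{ij}$, where $G_0$ is the Gram matrix of $b_0$. Hence $\det(\tilde q) = \det(C)\det(b_0) = \det(c)\,\Delta(E/\bQ)$. The key point is the evaluation $\det(c) = (-1)^d$: the involution $c$ fixes $E_0$ pointwise and acts by $-1$ on a complementary $d$-dimensional subspace (the purely imaginary part), so its eigenvalues are $+1$ and $-1$, each with multiplicity $d$. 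This gives $\det(\tilde q) = (-1)^d\,\Delta(E/\bQ)$.

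Next I would pass from $\tilde q$ to $q_\lambda$ via multiplication by $\lambda$. Since $q_\lambda(x,y) = \tilde q(\lambda x, y)$, the same bookkeeping with the matrix of $m_\lambda$ gives $\det(q_\lambda) = \det(m_\lambda)\det(\tilde q) = \Nm_{E/\bQ}(\lambda)\,\det(\tilde q)$. Because $\lambda$ lies in the totally real subfield $E_0$, its norm from $E$ factors as $\Nm_{E/\bQ}(\lambda) = \Nm_{E_0/\bQ}(\Nm_{E/E_0}(\lambda)) = \Nm_{E_0/\bQ}(\lambda^2) = \Nm_{E_0/\bQ}(\lambda)^2$, a perfect square, hence trivial in $\bQ^\times/\bQ^{\times2}$. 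Combining the two steps yields $\det(q_\lambda) = (-1)^d\,\Delta(E/\bQ)$, as claimed.

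I expect the only genuinely delicate point to be the sign $\det(c) = (-1)^d$, which is precisely where the CM structure of $E$ enters; everything else is routine multiplicativity of Gram determinants under linear substitutions, valid modulo squares. Should one prefer to avoid invoking the global trace-form identity, an alternative route is to factor $q_\lambda$ as the transfer along $\tr_{E_0/\bQ}$ of the rank-two $E_0$-bilinear form $\lambda\cdot\tr_{E/E_0}(x\bar y)$, whose determinant over $E_0$ one computes directly in the basis $\{1,\theta\}$ with $\theta^2=\beta$, $\bar\theta=-\theta$, to be $-4\beta$. The transfer formula $\det(\tr_{E_0/\bQ} b) \equiv \Delta(E_0/\bQ)^{r}\,\Nm_{E_0/\bQ}(\det_{E_0} b)$ (with $r=2$) then gives $\det(q_\lambda) \equiv \Nm_{E_0/\bQ}(-\beta) = (-1)^d\Nm_{E_0/\bQ}(\beta)$ modulo squares, and one reconciles this with $\Delta(E/\bQ)$ via $\Delta(E/\bQ) \equiv \Nm_{E_0/\bQ}(\Delta(E/E_0)) \equiv \Nm_{E_0/\bQ}(\beta) \bmod \bQ^{\times2}$.
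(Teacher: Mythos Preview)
Your argument is correct. The paper itself does not give a proof but merely cites \cite[Lemma~1.3.2]{Bayer14}, so your self-contained computation actually supplies more than the paper does. The main route you take---comparing $q_\lambda$ with the ordinary trace form $b_0(x,y)=\tr_{E/\bQ}(xy)$, whose Gram determinant is $\Delta(E/\bQ)$ by definition, and then accounting for the two linear substitutions $y\mapsto \bar y$ and $x\mapsto \lambda x$ via $\det(c)=(-1)^d$ and $\Nm_{E/\bQ}(\lambda)=\Nm_{E_0/\bQ}(\lambda)^2$---is the standard and cleanest proof of this fact, and is essentially what one finds in Bayer's paper. Your alternative via the transfer formula is also sound; the determinant of the rank-two $E_0$-form is literally $-4\lambda^2\beta$ rather than $-4\beta$, but of course these agree modulo squares, which is all you need.

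One small remark: the paper writes $q_\lambda(x,y)=\tr_{E_0/\bQ}(\lambda x\bar y)$, which is not literally well-defined since $\lambda x\bar y$ need not lie in $E_0$. You silently (and correctly) read this as $\tr_{E/\bQ}(\lambda x\bar y)$; it would be worth a one-line comment that the two possible interpretations differ at most by a global scalar whose effect on the determinant is a square, so the statement is unaffected.
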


\begin{proof}See~\cite[Lemma 1.3.2]{Bayer14}.
\end{proof}

\begin{lemma}\label{lemma:sign-of-q}
If $\lambda \in E_0^\times$ has signature $(r,s)$, then $q_\lambda$ has signature $(2r,2s)$.
\qed
\end{lemma}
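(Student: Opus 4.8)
The plan is to compute the signature by base-changing the form to $\bR$ and diagonalizing it there. Write $d = [E_0:\bQ]$ and let $\sigma_1, \ldots, \sigma_d \colon E_0 \to \bR$ be the real embeddings, so that the signature $(r,s)$ of $\lambda$ records that $\sigma_i(\lambda) > 0$ for exactly $r$ indices and $\sigma_i(\lambda) < 0$ for the remaining $s = d - r$ indices. Since $q_\lambda$ is a non-degenerate symmetric bilinear form on the $2d$-dimensional $\bQ$-vector space $E$, its signature is an invariant of $q_\lambda \otimes_\bQ \bR$, and the assertion $(2r,2s)$ is exactly a diagonalization statement over $\bR$.

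First I would fix, for each $i$, one of the two complex embeddings $\tau_i \colon E \to \bC$ extending $\sigma_i$, which identifies
\[
	E \otimes_\bQ \bR \;\isomto\; \prod_{i=1}^d \bC, \qquad a \otimes 1 \mapsto (\tau_1(a), \ldots, \tau_d(a)).
\]
Under this isomorphism the complex conjugation of $E$ becomes componentwise complex conjugation on $\bC^d$ (each $\tau_i$ intertwines the two, the conjugate pair over $\sigma_i$ being $\{\tau_i,\bar\tau_i\}$), the element $\lambda \in E_0^\times$ acts by the real scalars $(\sigma_1(\lambda), \ldots, \sigma_d(\lambda))$, and the field trace transports to the map $(z_1, \ldots, z_d) \mapsto \sum_i (z_i + \bar z_i)$.

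Then I would evaluate the associated quadratic form in these coordinates. Writing $x \leftrightarrow (x_1, \ldots, x_d)$ with $x_i \in \bC$, one obtains
\[
	q_\lambda(x, x) = \sum_{i=1}^d \sigma_i(\lambda)\,(x_i \bar x_i + \overline{x_i \bar x_i}) = 2 \sum_{i=1}^d \sigma_i(\lambda)\, |x_i|^2,
\]
the overall positive factor being an artifact of the trace normalization and hence irrelevant to the signature. Identifying each factor $\bC \cong \bR^2$, the $i$-th summand $2\sigma_i(\lambda)|x_i|^2$ is a rank-$2$ form, positive definite when $\sigma_i(\lambda) > 0$ and negative definite when $\sigma_i(\lambda) < 0$. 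Summing over $i$ contributes $(2,0)$ for each of the $r$ positive embeddings and $(0,2)$ for each of the $s$ negative ones, giving total signature $(2r, 2s)$, as claimed.

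The only point requiring care is the bookkeeping in the second paragraph: verifying that complex conjugation on $E$ and the field trace transport correctly under the chosen identification $E \otimes_\bQ \bR \cong \bC^d$, and observing that the particular trace normalization in the definition of $q_\lambda$ enters only as an overall positive scalar. Once these identifications are in place, the signature computation is immediate and the rest is routine.
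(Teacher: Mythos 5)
Your proof is correct. The paper states this lemma without proof (it is marked as immediate with a \qed), and your computation---base-changing to $\bR$, identifying $E \otimes_\bQ \bR \cong \bC^d$ compatibly with the CM involution, and diagonalizing $x \mapsto \sum_i \sigma_i(\lambda)\,|x_i|^2$ into rank-$2$ definite blocks---is exactly the standard argument the author takes for granted, including your correct observation that the trace normalization (the paper's $\tr_{E_0/\bQ}$ versus the full trace $\tr_{E/\bQ}$) only changes the form by a positive factor and so does not affect the signature.
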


\subsection{Construction of a K3 surface with CM by $E$}
A key ingredient in the proof of Theorem \ref{thm:K3-with-given-CM-field} is the following proposition on rational quadratic forms. I am grateful to Eva Bayer for pointing me to her work on maximal tori in orthogonal groups \cite{Bayer14}, and for explaining how it simplifies an earlier version of the proof below.

\begin{proposition}\label{prop:CM-to-K3}
Let $E$ be a CM field with maximal totally real subfield $E_0$, and assume $d:=[E_0: \bQ] \leq 10$.  Then there exists a $\lambda \in E_0^\times$ of signature $(1,d-1)$ and a quadratic space $V$ such that 
\begin{equation*}
	(E, q_\lambda) \,\oplus\, V \,\cong\, \Lambda_{K3,\bQ}
\end{equation*}
as quadratic spaces over $\bQ$.
\end{proposition}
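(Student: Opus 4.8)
The plan is to realize the isomorphism $(E,q_\lambda) \oplus V \cong \Lambda_{K3,\bQ}$ by invoking the global classification theorem (Theorem~\ref{thm:global-classification}). The target lattice $\Lambda_{K3,\bQ}$ has dimension $22$, signature $(3,19)$, determinant $-1$, and Hasse invariant $(-1,-1)_\bQ$ by Lemma~\ref{lemma:invariants-K3}. The summand $(E,q_\lambda)$ has dimension $2d$; with $\lambda$ of signature $(1,d-1)$, Lemma~\ref{lemma:sign-of-q} gives it signature $(2,2d-2)$. So I want $V$ to have dimension $22-2d$ and signature $(1,21-2d)$, and I need to choose $\lambda$ and $V$ so that the determinants and Hasse invariants match up via the additivity formulas of Lemma~\ref{lemma:additivity}. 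Since $d \leq 10$ we have $22-2d \geq 2$, and in fact the interesting constraint $r+s \geq 3$ for the existence part of Theorem~\ref{thm:global-classification} applies to the whole lattice, which has dimension $22$.

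First I would fix a $\lambda \in E_0^\times$ of signature $(1,d-1)$; such a totally real element exists by weak approximation (or directly, since $E_0$ is dense in $\prod_v E_{0,v} \cong \bR^d$ at the archimedean places). Lemma~\ref{lemma:disc-of-q} then computes $\det(q_\lambda) = (-1)^d\Delta(E/\bQ)$, independent of the choice of $\lambda$. Next I would try to \emph{define} $V$ to have exactly the invariants forced by the equation: namely $\dim V = 22-2d$, signature $(1,21-2d)$, determinant $\det(V) = \det(\Lambda_{K3,\bQ})\cdot\det(q_\lambda)^{-1} = -(-1)^d\Delta(E/\bQ)$ in $\bQ^\times/\bQ^{\times2}$, and Hasse invariant $w(V)$ solved from the additivity relation
\[
	w(\Lambda_{K3,\bQ}) = w(q_\lambda) + w(V) + (\det(q_\lambda),\det(V))_\bQ.
\]
To conclude that such a $V$ actually exists over $\bQ$, I must verify the two compatibility conditions of Theorem~\ref{thm:global-classification}: that the sign of $\det(V)$ equals $(-1)^{21-2d}$, and that the image of $w(V)$ in $\Br(\bR)[2]$ equals $s(s-1)/2 \bmod 2$ with $s = 21-2d$.

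The main obstacle is verifying these two local-at-$\infty$ consistency conditions, i.e. checking that the determinant sign and the real Hasse invariant of the prescribed $V$ are genuinely compatible with its prescribed signature $(1,21-2d)$. This is where the flexibility in the choice of $\lambda$ may be needed: while $\det(q_\lambda)$ is rigid, the Hasse invariant $w(q_\lambda)$ can be adjusted by varying $\lambda$ among elements of signature $(1,d-1)$, and one can hope that some such choice makes the archimedean conditions for $V$ hold. I expect the computation to reduce to a bookkeeping check at the real place that the parities work out, using that $\Lambda_{K3,\bQ}$ itself satisfies the two conditions of Theorem~\ref{thm:global-classification} for signature $(3,19)$, and that $(E,q_\lambda)$ satisfies them for signature $(2,2d-2)$; the additivity of both invariants should then propagate compatibility to $V$. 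I anticipate this is precisely the role played by Bayer's results cited before the proposition, which presumably package exactly this archimedean Hasse-invariant computation for the trace forms $q_\lambda$.
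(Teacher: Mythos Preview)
Your strategy is correct and matches the paper exactly when $d<10$: fix any $\lambda$ of signature $(1,d-1)$, solve for the invariants of $V$ via Lemma~\ref{lemma:additivity}, observe that the archimedean compatibility conditions of Theorem~\ref{thm:global-classification} propagate automatically from $(E,q_\lambda)$ and $\Lambda_{K3,\bQ}$ to $V$, and invoke existence since $\dim V = 22-2d \geq 3$. Your expectation that this ``bookkeeping check at the real place'' works out is right, and no flexibility in $\lambda$ is needed for it.

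The gap is at $d=10$. There $\dim V = 2$, and the existence clause of Theorem~\ref{thm:global-classification} requires $r+s\geq 3$; your remark that ``the constraint $r+s\geq 3$ applies to the whole lattice'' is a misreading --- you need it for $V$, the form you are trying to produce. In dimension $2$ not every pair (determinant, Hasse invariant) is realized over $\bQ_p$, so for an arbitrary $\lambda$ the required complement $V$ may simply fail to exist. The paper therefore reverses the order: it \emph{fixes} the two-dimensional complement as $V=\langle -1,\delta\rangle$ with $\delta=\Delta(E/\bQ)$, lets $W$ be its (twenty-dimensional, hence unproblematic) orthogonal complement in $\Lambda_{K3,\bQ}$, and then shows $W\cong(E,q_\lambda)$ for a suitable $\lambda$. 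This last step is where Bayer's criterion enters: it gives necessary and sufficient local conditions for a given quadratic space to be of the form $(E,q_\lambda)$, and the paper verifies them for $W$. So Bayer's results are not packaging an archimedean Hasse-invariant computation as you guessed, but rather supplying the converse direction --- realizing a prescribed form as a trace form --- precisely in the boundary case where your direct approach runs out of dimensions.
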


\begin{proof}
If $d<10$ then we claim that for every choice of $\lambda$ a complement $V$ exists. Indeed, given a choice of $\lambda$, then  the dimension, signature, determinant and Hasse invariant of $V$ are determined by Lemma \ref{lemma:additivity}. These invariants satisfy conditions (1) and (2) of  Theorem \ref{thm:global-classification} because they are satisfied by the invariants of $(E,q_\lambda)$ and $\Lambda_{K3,\bQ}$.  Since $\dim(V)>2$, the theorem then guarantees the existence of a form $V$ with $(E, q_\lambda) \oplus V \cong \Lambda_{K3,\bQ}$.

So we assume $d=10$. Let $\delta = \Delta(E/\bQ) \in \bQ^\times/\bQ^{\times 2}$. Note that $\delta>0$ (since $d$ is even).
Consider the diagonal quadratic space $V=\langle -1,\delta\rangle$. By the same reasoning as above, there exists a unique quadratic space $W$ of dimension $20$ such that
\[
	W \,\oplus\, \langle -1,\delta\rangle \,\cong\, \Lambda_{K3,\bQ}.
\]
We will show that $W$ can be realized as $(E,q_\lambda)$ for a suitable choice of
$\lambda\in E_0^\times$. Note that $W$ has signature $(2,18)$, so by Lemma \ref{lemma:sign-of-q} the scalar $\lambda$ will automatically have signature $(1,9)$.

By Cor.~4.0.3 and Prop.~1.3.1 of \cite{Bayer14}, there exists a $\lambda$ with $(E,q_\lambda)\cong W$ if and only if the following three conditions hold
\begin{enumerate}
\item the signature of $W$ is even
\item $\disc(W) = \delta$
\item for every prime $p$ such that all places of $E_0$ above $p$ split in $E$ we have that $W_{\bQ_p}$ is isomorphic to an orthogonal sum of $10$ hyperbolic planes.
\end{enumerate}
Our $W$ clearly satisfies the first two conditions. For the third, consider a prime $p$  such that all places of $E_0$ above $p$ split in $E$. Then the image  of $\delta$ in $\bQ_p^\times/\bQ_p^{\times2}$ is $1$. Together with Lemma \ref{lemma:additivity} and Lemma \ref{lemma:invariants-K3} this allows us to compute the invariants of $W_{\bQ_p}$, and we find $\det(W_{\bQ_p}) = 1$ and $w(W_{\bQ_p}) = (-1,-1)_{\bQ_p}$.
These are the same as the invariants for $10$ copies of the hyperbolic plane, so with Theorem \ref{thm:local-classification} we see that $W$ satisfies the third condition, which finishes the proof of the proposition.
\end{proof}

Finally, we show that for every CM field $E$ of degree at most 20 there exists a projective K3 surface $X$ with CM by $E$.

\begin{proof}[Proof of Theorem \ref{thm:K3-with-given-CM-field}]
Choose $\lambda \in E_0$ and $V$ as in Proposition \ref{prop:CM-to-K3}. This guarantees that there exists an integral lattice
\[
	\Lambda \subset (E,q_\lambda) \oplus V
\]
with $\Lambda \cong \Lambda_{K3}$. Choose such a $\Lambda$, and choose an embedding $\epsilon\colon E \injto \bC$ with $\epsilon(\lambda) >0$. Then we have a splitting
\[
	\Lambda_\bC \,=\, \bC_{\epsilon} \,\oplus\, \bC_{\bar\epsilon} 
		\,\oplus\, (\oplus_{\sigma\not= \epsilon,\bar\epsilon} \bC_\sigma)
		\,\oplus\, V_{\bC}.
\]
We make $\Lambda$ into a pure $\bZ$-Hodge structure of weight $0$ by declaring $\bC_{\epsilon}$ to be of type $(1,-1)$, its conjugate $\bC_{\bar\epsilon}$ of type $(-1,1)$, and all the other terms of type $(0,0)$. By construction, the bilinear form $\Lambda \otimes \Lambda \to \bZ$ is a morphism of Hodge structures. Note that $E$ acts on $E \subset \Lambda_\bQ$ via Hodge structure endomorphisms, so that $E$ is irreducible and hence
\[
	\Lambda^{0,0} \cap \Lambda_\bQ = V.
\]
For every non-zero $z \in \rH^{2,0}$ we have $z\cdot \bar{z} \in \bR_{>0}$ since $\epsilon(\lambda)>0$, 
so that the surjectivity of the period map \cite{Todorov80} gives the existence of a complex analytic K3 surface $X$ and a Hodge isometry $\Lambda \cong \rH^2(X,\bZ(1))$. A priori, it may not be clear that $X$ is algebraic. However, as 
 $\Pic(X)_{\bQ} \cong V$ has signature $(1,21-2d)$, there exists an $h\in \Pic(X)$ with $h\cdot h > 0$. By \cite[Thm.~IV.6.2]{BHPV} this implies that the surface $X$ is projective. By construction, $X$ is a K3 surface with CM by $E$.
\end{proof}

\begin{remark}A similar construction has been used by
Piatetski-Shapiro and Shafarevich \cite[\S 3]{PiatetskiShapiroShafarevich73} in showing the existence of some K3 surfaces with CM. The new ingredients that allow us to obtain a stronger result are the use of rational (as opposed to integral) quadratic forms, the results of Bayer on quadratic forms $q_\lambda$, and the use of the algebraicity criterion from \cite{BHPV}, which avoids the delicate question of identifying an ample $h\in \Pic(X)$.
\end{remark}

\section{Existence of K3 surface with given $L_\trc$}

In this section we will prove Theorem \ref{thm:converse}. So
let
\[
	L = \prod_{i=1}^{2d} (1-\gamma_iT) \in 1+T\bQ[T]
\]
be a polynomial satisfying properties (1)--(5) of Theorem \ref{thm:properties}. 
Consider the number field $F:=\bQ(\gamma_1)$. 

\begin{lemma}$F$ is a CM field and $\bar\gamma_1\gamma_1=1$.
\end{lemma}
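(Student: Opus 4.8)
The plan is to equip $F=\bQ(\gamma_1)$ with an explicit complex conjugation and check it has the defining features of a CM field, using essentially only the archimedean constraint (1) together with (2). First I would record that the $\gamma_i$ are algebraic: since $L(0)=1$, the reversed polynomial $\widetilde L(T):=T^{2d}L(1/T)=\prod_i(T-\gamma_i)$ again lies in $\bQ[T]$, so $\{\gamma_1,\dots,\gamma_{2d}\}$ is stable under $\Gal(\bar\bQ/\bQ)$ and in particular contains every Galois conjugate of $\gamma_1$. By property (1) the roots $1/\gamma_i$ of $L$ have absolute value $1$, hence so does each $\gamma_i$; thus every conjugate of $\gamma_1$ lies on the unit circle.

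Next I would produce the conjugation. In the standard embedding $F\subset\bC$ we have $|\gamma_1|=1$, so the complex conjugate satisfies $\bar\gamma_1=1/\gamma_1$, which already lies in the field $F$. Therefore complex conjugation sends the generator $\gamma_1$ into $F$ and fixes $\bQ$, so it restricts to an involution $c\colon F\to F$ with $c(\gamma_1)=1/\gamma_1$. I would then check $c\neq\id$: if $c=\id$ then $\gamma_1=1/\gamma_1$, forcing $\gamma_1=\pm1$, a root of unity, contradicting property (2). Hence $c$ has order $2$ and the fixed field $F_0:=F^c$ satisfies $[F:F_0]=2$.

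The crux — and the one step I expect to require care — is to show that $c$ is induced by complex conjugation under \emph{every} embedding, not merely the chosen one; this uniformity is what upgrades the construction to the CM condition. Given an arbitrary $\sigma\colon F\to\bC$, the image $\sigma(\gamma_1)$ is a conjugate of $\gamma_1$, hence one of the $\gamma_i$, so $|\sigma(\gamma_1)|=1$ and $\overline{\sigma(\gamma_1)}=1/\sigma(\gamma_1)=\sigma(c(\gamma_1))$. Since $\gamma_1$ generates $F$ over $\bQ$, the two $\bQ$-algebra maps $\overline{\sigma(\,\cdot\,)}$ and $\sigma\circ c$ agree everywhere, i.e. $\overline{\sigma(x)}=\sigma(c(x))$ for all $x\in F$ and all $\sigma$. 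From this I would read off: $F$ is totally imaginary (a real embedding would force $c=\id$), and $F_0$ is totally real (for $x\in F_0$ and any $\sigma$ one gets $\overline{\sigma(x)}=\sigma(c(x))=\sigma(x)$). Thus $F$ is a totally imaginary quadratic extension of a totally real field, i.e. a CM field with complex conjugation $c$, and finally $\bar\gamma_1\gamma_1=c(\gamma_1)\gamma_1=(1/\gamma_1)\gamma_1=1$.

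I should note that this argument uses none of the $p$-adic data (3)--(5); only the size constraint (1) and the non-root-of-unity condition (2) are needed, and the genuine (if mild) obstacle is the passage from one embedding to all embeddings, which is exactly where the hypothesis that \emph{all} conjugates of $\gamma_1$ have absolute value $1$ does the work.
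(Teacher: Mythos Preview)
Your argument is correct and follows the same line as the paper's proof: both use that every complex embedding sends $\gamma_1$ to a unit-modulus number (property (1)), hence complex conjugation acts as $\gamma_1\mapsto\gamma_1^{-1}$ independently of the embedding, and property (2) rules out $\gamma_1=\pm1$ so this involution is nontrivial. The paper compresses this into three sentences, invoking the standard characterization of CM fields implicitly, whereas you spell out the passage from one embedding to all embeddings and the totally real/totally imaginary verifications explicitly.
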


\begin{proof}
The image $\gamma$ of $\gamma_1$ under any homomorphism $F\to\bC$ satisfies
$|\gamma|=1$, hence $\bar\gamma=\gamma^{-1}$. Moreover  $\gamma$ cannot be real, since then $\gamma=\pm 1$, contradicting the fact that $\gamma_1$ is not a root of unity.  It follows that $F$ is a CM field with complex conjugation $\gamma_1 \mapsto \gamma_1^{-1}$.
\end{proof}

By property (5), the number field $F$ has a unique valuation $v$ above the prime $p$ such that $v(\gamma_1)<0$. 

\begin{lemma}There exists an extension $E$ of $F$ with $[E:\bQ]=2d$, and such that
\begin{enumerate}
\item $E$ is a CM field;
\item the valuation $v$ has a unique extension to $E$. 
\end{enumerate}
\end{lemma}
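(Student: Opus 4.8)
The plan is to build $E$ as a compositum $E = E_0\cdot F$, where $E_0$ is a totally real field containing $F_0$ with $[E_0:F_0]=e$, chosen so that the place $v$ acquires a unique extension. Here $e$ is the exponent from property (5): since $L = Q^e$ with $Q\in\bQ[T]$ irreducible and $\gamma_1$ a reciprocal root of $Q$, we have $[F:\bQ]=\deg Q = 2d/e$, so any $E$ as in the statement must satisfy $[E:F]=e$, that is $[E_0:F_0]=e$.

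First I would record the key local fact. Let $v_0$ be the restriction of $v$ to $F_0$. Since complex conjugation on $F$ is $\gamma_1\mapsto\gamma_1^{-1}$ and $v(\gamma_1)<0$, the conjugate place $\bar v$ satisfies $\bar v(\gamma_1)=-v(\gamma_1)>0$, so $\bar v\neq v$; as $[F:F_0]=2$ this means $v_0$ splits in $F/F_0$ and the completion $F_v$ equals $(F_0)_{v_0}$. Now suppose $E_0\supset F_0$ is totally real with $[E_0:F_0]=e$ and put $E=E_0 F$. Because $E_0$ is totally real it cannot contain the CM field $F$, so $E_0\cap F=F_0$; as $F/F_0$ is Galois this forces linear disjointness, hence $[E:F]=e$ and $E=E_0\otimes_{F_0}F$. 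The field $E=E_0(\gamma_1)$ is a quadratic extension of the totally real $E_0$, and it is totally imaginary: under any real embedding of $E_0$ the element $\gamma_1+\gamma_1^{-1}\in F_0$ has image $2\,\mathrm{Re}(\gamma_1)\in(-2,2)$, since $|\gamma_1|=1$ with $\gamma_1$ non-real by property (1), so $\gamma_1$ has non-real image. Thus $E$ is automatically CM of degree $2d$. Finally the place condition reduces cleanly: $E\otimes_F F_v = E_0\otimes_{F_0}(F_0)_{v_0}=\prod_{w_0\mid v_0}(E_0)_{w_0}$, which is a field exactly when $v_0$ has a unique extension to $E_0$.

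It therefore remains to construct a totally real $E_0/F_0$ of degree $e$ in which $v_0$ has a single place above it. I would produce $E_0=F_0[X]/(g)$ for a monic $g\in F_0[X]$ of degree $e$ obtained by weak approximation in $F_0$: at $v_0$ I require $g$ to be close to the minimal polynomial over $(F_0)_{v_0}$ of a generator of the degree-$e$ unramified extension, and at each real place of $F_0$ I require $g$ to be close to a fixed real polynomial with $e$ distinct real roots. By Krasner's lemma the first condition makes $g$ irreducible over $(F_0)_{v_0}$, so $E_0\otimes_{F_0}(F_0)_{v_0}$ is a field and $v_0$ has a unique extension to $E_0$; the second condition, being open, guarantees that $g$ has $e$ real roots under every real embedding of $F_0$, so $E_0$ is totally real. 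Setting $E=E_0 F$ then gives the desired field.

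The main obstacle is this last construction: one must arrange two competing local conditions simultaneously, a non-split (e.g.\ unramified) degree-$e$ extension at $v_0$ and total reality at every archimedean place, which is precisely what the combination of weak approximation and Krasner's lemma delivers. The reductions in the first two paragraphs (splitting of $v_0$, automatic CM-ness of the compositum, and the base-change computation of places) are formal once the local splitting fact is in hand.
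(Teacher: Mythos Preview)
Your proposal is correct and follows essentially the same route as the paper: both construct a polynomial $g\in F_0[X]$ of degree $e$ that is totally split at every real place and irreducible at $v_0$, set $E_0=F_0[X]/(g)$, and take $E=E_0F=F[X]/(g)$; both use the splitting of $v_0$ in $F$ (from $\bar v\neq v$) to identify $F_v$ with $(F_0)_{v_0}$. The paper is terser and leaves the verification that $E$ is CM implicit, whereas you spell out the linear disjointness and the totally-imaginary check via the discriminant of $X^2-(\gamma_1+\gamma_1^{-1})X+1$; one tiny remark is that the non-reality of $\gamma_1$ comes from property (2) (no root of unity) together with (1), not from (1) alone.
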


\begin{proof}
Let $F_0$ be the maximal totally real subfield of $F$. Let $v_0$ be the place of $F_0$ under $v$.  Now choose a  polynomial $P(X)\in F_0[X]$  
such that
\begin{enumerate}
\item $\deg P = e$;
\item $P$ has $e$ real roots for every embedding $F_0 \injto \bR$;
\item $P$ is irreducible in $(F_0)_{v_0}[X]$.
\end{enumerate}
Note that $v_0$ splits in $F$, since by the preceding lemma $\bar{v}(\gamma_1)>0$ and hence $\bar{v} \neq v$. In particular $P(X)$ is irreducible in $F_v[X]$, and it follows that $E := F[X]/P(X)$ is a field satisfying the desired conditions. 
\end{proof}

We fix an $E$ satisfying the conditions of the lemma. Abusing notation, we will denote the unique extension of $v$ to $E$ by the same symbol $v$.

\begin{lemma}$[E_v:\bQ_p]=h$.
\end{lemma}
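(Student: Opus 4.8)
The plan is to compute the local degree $[E_v:\bQ_p]$ by tracking how the valuation $v$ behaves in the tower $\bQ \subset F \subset E$, using the structure of the Newton polygon encoded in property (5) of Theorem \ref{thm:properties}. First I would pin down the local degree of $F$ at $v$. Recall that $L_\trc = Q^e$ with $Q$ irreducible in $\bQ[T]$, and $F = \bQ(\gamma_1)$ is the number field generated by a root of $Q$; thus $[F:\bQ] = \deg Q = 2d/e$. The Newton polygon in property (4) has its segment of negative slope running from $(0,0)$ to $(h, -a)$, so exactly $h$ of the $2d$ reciprocal roots $\gamma_i$ have negative $p$-adic valuation (counted with multiplicity). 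Since $L_\trc = Q^e$, the irreducible polynomial $Q$ has exactly $h/e$ reciprocal roots of negative valuation. By property (5), $Q$ has a \emph{unique} irreducible factor $Q_{<0}$ in $\bQ_p[T]$ with negative slope, and this factor corresponds precisely to the unique valuation $v$ of $F$ above $p$ with $v(\gamma_1)<0$; its degree is $\deg Q_{<0} = h/e = [F_v:\bQ_p]$.

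Next I would handle the extension $E/F$. By the construction in the preceding lemma, $E = F[X]/P(X)$ with $P$ irreducible of degree $e$ over the completion $F_v$, and $v$ extends uniquely to $E$. The uniqueness of the extension together with the irreducibility of $P$ over $F_v$ means that the completion $E_v$ is simply $F_v[X]/P(X)$, a field extension of $F_v$ of degree $e$. Hence
\[
	[E_v:\bQ_p] = [E_v:F_v]\cdot[F_v:\bQ_p] = e \cdot \frac{h}{e} = h,
\]
which is the desired equality.

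The one point requiring care is the bookkeeping in the first paragraph: one must be sure that the number of negative-slope reciprocal roots of $L_\trc$ really equals $h$, and that these are distributed evenly among the $e$ conjugate copies of $Q$. The equality with $h$ is exactly the horizontal extent of the negative-slope segment of the Newton polygon in property (4), where $\rH^2_\crys(X/W)_{<1} \cong \bD(\Brhat X)$ has rank $h$ (Proposition \ref{prop:crys-Br}); the even distribution follows because the $e$ factors of $L_\trc = Q^e$ are all equal, so each contributes the same count $h/e$ of negative-valuation roots, and in particular $e \mid h$. The main obstacle is therefore not a difficult argument but rather the careful identification of $\deg Q_{<0}$ with $[F_v:\bQ_p]$: one must observe that the completion $F_v$ is generated over $\bQ_p$ by the image of $\gamma_1$, whose minimal polynomial over $\bQ_p$ is precisely the unique negative-slope factor $Q_{<0}$ singled out in property (5). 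Once this translation between the Newton-polygon data and the local field degrees is made explicit, the computation is immediate.
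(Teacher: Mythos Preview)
Your proof is correct and follows essentially the same route as the paper's own argument: first deduce $[F_v:\bQ_p]=h/e$ from properties (4) and (5) by identifying the unique negative-slope factor $Q_{<0}$ of $Q$ over $\bQ_p$ with the completion at $v$, then use that $v$ has a unique extension to $E$ with $[E:F]=e$ to conclude $[E_v:\bQ_p]=h$. The paper compresses the first step into a single sentence, while you have helpfully unpacked the Newton-polygon bookkeeping; there is no substantive difference in strategy.
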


\begin{proof}
Since $L=Q^e$, and since $v$ is the unique place with $v(\gamma_1)<0$, we see from properties (4) and (5) in Theorem \ref{thm:properties} that $[F_v:\bQ_p]=h/e$. But $[E:F]=e$ and $v$ has a unique extension to $E$, hence $[E_v:\bQ_p]=h$.
\end{proof}

Let $X$ be a K3 surface over $\bC$ with CM by $E$. By the Main Theorem of CM (Theorem \ref{thm:main-thm-CM}) this surface is defined over a number field $k$ containing $E$. Let $w$ be a place of $k$ lying above $v$. We extend the commutative diagram of Theorem \ref{thm:main-thm-CM}  to include the local-global compatibility of class field theory: \begin{equation*}\label{eq:big-diagram}
\begin{tikzcd}
W_{k_w} \arrow[tail]{d} \arrow[tail]{r} 
	& W_{E_v} \arrow[tail]{d} \arrow[two heads]{r}{\text{LCFT}}
	& E_v^\times \arrow{d} \\
\Gal_k \arrow{d}{\rho} \arrow[tail]{r}
	& \Gal_E \arrow[two heads]{r}{\text{GCFT}}
	& \bA_{E,f}^\times/E^\times \arrow{d}{z \mapsto  z/\bar z}\\
	G(\bA_f) \arrow{rr} 
	& 
	& G(\bA_f)/G(\bQ)
\end{tikzcd}
\end{equation*}
Here $W_{k_w}\subset \Gal_{k_w}$ denotes the Weil group of the local field $k_w$. Extending $k$ if necessary, we may assume that the residue field $\bF_w$ is an extension of $\bF_q$.

Choose a prime $\ell\neq p$. Then the image of inertia $I_{k_w}$ in $G(\bZ_\ell)$ is finite,
hence after replacing $k$ by a finite extension, we may assume that the action of $\Gal_{k_w}$ on $\rH^2_\et(X_{\bar{k}},\bQ_\ell(1))$ is unramified. 

Now assume $X_{k_w}$ satisfies ($\star$). Then, replacing $k$ once more by a finite extension, we may assume by the criterion of Liedtke and Matsumoto \cite[Thm~2.5]{LiedtkeMatsumoto15} that $X$ has good reduction at $w$. Let $\bar{X}/\bF_w$ be the reduction of $X/k$ at $w$.

Let $\sigma \in W_{k_w}$ be a Frobenius element. Note that $\gamma_1$ lies in $G(\bQ)\subset E^\times$.

\begin{proposition}
There is an $m>0$ such that  for all $\ell\neq p$ we have
\[
	\rho(\sigma^m)_\ell = \gamma_1^{m [\bF_w:\bF_q]}
\]
in $G(\bQ_\ell)$.
\end{proposition}

\begin{proof}
Let $\pi \in E_v^\times$ be the image of $\sigma$ under the CFT map. Then
\[
	v(\pi)=\frac{e(k_w:E_v)}{f(E_v:\bQ_p)}.
\]
The image of $\pi$ in $G(\bA_f)/G(\bQ)$ is the class of the id\`ele 
\[
	(1,\ldots, 1, \pi, \bar\pi^{-1}, 1, \ldots, 1 ) \in \bA_{E,f}^\times,
\]
where $\bar\pi \in E_{\bar v}$ denotes the image of $\pi$ under the isomorphism $E_v \to E_{\bar v}$ induced by complex conjugation on $E$.

We have $v(\gamma_1)=-[\bF_q:\bF_p]/h$ from which we compute
\[
	v(\gamma_1^{[\bF_w:\bF_q]})=-v(\pi)
\]
and hence $\bar{v}(\gamma_1^{[\bF_w:\bF_q]})=v(\pi)$. Moreover, $\gamma_1$ is a unit at all places of $E$ different from $v$ and $\bar{v}$. It follows that the id\`ele
\[
	\alpha := \gamma_1^{[\bF_w:\bF_q]} \cdot
	(1,\ldots,1,\pi,\bar\pi,1,\ldots, 1) \in \bA_{E,f}^\times.
\]
lies in the maximal compact subgroup 
\[
	\cK = \{ g \in (\cO_E\otimes \hat\bZ)^\times \mid g\bar{g} = 1 \} \subset G(\bA_f).
\]
Since $\Gal_k$ is compact also $\rho(\sigma)$ lies in $\cK$. From the commutativity of the diagram (\ref{eq:big-diagram}) we conclude that
$\rho(\sigma)/\alpha$ lies in the kernel of the map
\[
	\cK \to G(\bA_f)/G(\bQ).
\]
This kernel equals $\{ g \in \cO_E^\times \mid g\bar{g}=1 \}$,
which is finite by the Dirichlet unit theorem.
We conclude that
$\rho(\sigma^m) = \alpha^m$ for some $m$, and hence
\[
	\rho(\sigma^m)_\ell = \gamma_1^m
\]
in $G(\bQ_\ell)$ for all $\ell \neq p$.
\end{proof}

We have
\[
	L(\bar{X}/\bF_w) = {\det}_{\bQ_\ell}(1-\sigma T,\, \rH^2_\et(X_{\bar \bF_w}, \bQ_\ell(1)) ).
\]
Since none of the conjugates of $\gamma_1$ are roots of unity, we conclude with the preceding proposition that there is a finite extension $\bF_w\subset \bF_w'$  such that
\[
	L_\trc(\bar{X}_{\bF_w'}/\bF_w') = {\det}_\bQ(1-\gamma_1^{[\bF_w':\bF_q]} T,\, E),
\]
or in other words:
\[
	L_\trc(\bar{X}_{\bF_w'}/\bF_w') = \prod_i (1-\gamma_i^{[\bF_w':\bF_q]} T),
\]
which finishes the proof of Theorem \ref{thm:converse}.

\end{document}